\theoremstyle{plain}%
  \newtheorem{theorem}{Theorem}[section]
  \newtheorem{proposition}{Proposition}[section]
  \newtheorem{lemma}{Lemma}[section]
  \newtheorem{note}{Note}[section]
  \newtheorem{Definition}{Definition}[section]
\theoremstyle{remark}
\newtheorem{remark}{Remark}[section]
\theoremstyle{definition}
\newtheorem{example}{Example}[section]
\newfont{\hueca}{msbm10}
\def\hu #1{\hbox{\hueca #1}}\def\hu #1{\hbox{\hueca #1}}
\begin{document}

\title{Weight modules over split Lie algebras}
\author{Antonio J. Calder\'{o}n Mart\'{\i }n
\thanks{Supported by the PCI of the UCA `Teor\'\i a de Lie y
Teor\'\i a de Espacios de Banach', by the PAI with project numbers
 FQM298, FQM2467, FQM3737 and by the project of the Spanish Ministerio de Educaci\'on y Ciencia
 MTM2007-60333.}\\
Jos\'{e} M.  S\'{a}nchez Delgado \\
Departamento de Matem\'{a}ticas. \\
Universidad de C\'{a}diz. 11510 Puerto Real, C\'{a}diz, Spain.\\
e-mail: ajesus.calderon@uca.es\\
e-mail: josemaria.sanchezdelgado@alum.uca.es }
\date{}
\maketitle

\begin{abstract}
We study the structure of  weight modules $V$ with  restrictions
neither on the dimension nor on the base field, over split Lie
algebras $L$.
We show that if $L$ is perfect and $V$ satisfies $LV=V$ and
${\mathcal Z}(V)=0$, then
$$\hbox{$L =\bigoplus\limits_{i\in I} I_{i}$ and $V = \bigoplus\limits_{j \in J} V_{j}$}$$
 with any $I_{i}$ an ideal of $L$
satisfying $[I_{i},I_{k}]=0$ if $i \neq k$, and any $V_{j}$ a
(weight) submodule of $V$ in such a way that for any $j \in J$
there exists a unique $i \in I$ such that $I_iV_j \neq 0,$ being
$V_j$  a weight module over  $I_i$. Under certain conditions, it
is shown that the above decomposition of $V$ is by means of the
family of its minimal submodules, each one being a simple (weight)
submodule.
\medskip

{\it Keywords}: Infinite dimensional Lie module, infinite
dimensional split Lie algebra, structure theory.

\end{abstract}

\section{Introduction and previous definitions}



Throughout this paper, weight modules $V$ and split Lie algebras
$L$ are considered of arbitrary dimensions and over an arbitrary
base field ${\hu K}$. It is worth to mention that, unless
otherwise stated, there is not any restriction on
$\dim{V}_{\gamma}$, $\dim{L}_{\alpha}$ or the products
$L_{\alpha}V_{\gamma}$  where $V_{\gamma}$ denotes the weight
space associated to the weight $\gamma$ of $V$ and $L_{\alpha}$
the root space associated to the root $\alpha$ of  $L$.

In $\S2$ we develop connection of weights techniques in the
framework of weight modules $V$ over split Lie algebras $L$ so as
to show that in case $LV=V$ and ${\mathcal Z}(V)=0$ then $V$
decomposes as the direct sum of an adequate family of nonzero
(weight) submodules of $V$, $V = \bigoplus\limits_{j\in J} V_j$.
In $\S3$ we slightly modify  the above mentioned connection
techniques of weights in order to apply them to the set of nonzero
roots of $L$, as consequence we show that if $L$ is perfect then
$L = \bigoplus\limits_{i\in I} I_i$ where any $I_i$ is a nonzero
ideal of $L$ satisfying $[I_i,I_k]=0$ if $i \neq k$. In $\S4$ we
relate the decompositions of $V$ and $L$ obtained in the previous
sections so as to get as main result that in case $LV=V$,
${\mathcal Z}(V)=0$ and $L$ is perfect then $L =
\bigoplus\limits_{i\in I} I_i$ and  $V = \bigoplus\limits_{j\in J}
V_j$ as above, in such a way that for any $j \in J$ there exists
one and only one $i\in I$ such that $I_iV_j \neq 0$,
 being any $V_j$ a weight module over $I_i$.  In the last section,
 $\S5$, we study  if any of the $V_j$, $j \in J$, in the above decomposition of $V$ is simple. Under certain
 conditions we give an affirmative answer. Finally, we would like
 to note the increasing interest in the study of weight modules
 over split Lie algebras, (and superalgebras),
 specially motivated by their relation with mathematical physics
 (see \cite{phy1, phy2, phy3, phy4, phy5, chino1, chino2}).

\medskip

Given an element $x$ of a Lie algebra $L,$ we denote  by  ${\rm
ad}_x$  the adjoint mapping ${\rm ad}_x$ defined as ${\rm
ad}_x(y):=[x,y]$ for any $y \in L$. A {\it splitting Cartan
subalgebra} $H$ of  ${L}$ is defined as a maximal abelian
subalgebra  of $L$, satisfying that the adjoint mappings ${\rm
ad}_h$, for $h \in H$, are simultaneously diagonalizable. If $L$
contains a splitting Cartan subalgebra $H$, then $L$ is called a
{\it split Lie algebra}, (see for instance \cite{Stu}). This means
that we have a root spaces decomposition
  $$L= H \oplus (\bigoplus\limits_{\alpha \in \Lambda}{L}_{\alpha})$$ where
$L_{\alpha}= \{v_{\alpha} \in L : [v_{\alpha},h] =
\alpha(h)v_{\alpha}$ for any $h \in H\}$ for a linear functional
$\alpha \in H^*$ and $\Lambda := \{\alpha \in H^* \backslash
\{0\}: L_{\alpha} \neq 0\}$. The subspaces $L_{\alpha}$ for
$\alpha \in H^*$ are called {\it root spaces} of $L$, (respect to
$H$),  and the elements $\alpha \in \Lambda \cup \{0\}$ are called
{\it roots} of $L$, (respect to $H$). Clearly  $L_0 = H$ and, as
consequence of Jacobi identity, $[L_{\alpha}, L_{\beta}] \subset
L_{\alpha + \beta}$ for any $\alpha, \beta \in \Lambda \cup
\{0\}$. We also say that $\Lambda$ is {\it symmetric} if for any
$\alpha \in \Lambda$ we have that  $-\alpha \in \Lambda$. Here we
note that there are many interesting examples of split Lie
algebras with a symmetric root system. For instance, we have the
separable semisimple  $L^*$-algebras (\cite{Schue1}), the
semisimple locally finite split Lie algebras over a  field of
characteristic zero (\cite{Neeb}), the generalized oscilator
algebras (\cite[Example 1.4, (a)]{Stu}) or the Virasoro algebras
(\cite[Example 1.4, (b)]{Stu}).

We will denote by ${\mathcal Z}(L)=\{e \in L: [e,L] = 0\}$ the
{\it center} of a Lie algebra $L$. We also recall that $L$ is
called {\it perfect} if ${\mathcal Z}(L)=0$ and $[L,L]=L$.



\begin{Definition}\label{def2}
Let $V$ be a  module over a  Lie algebra $L$ with splitting Cartan
subalgebra $H$.  For a linear functional $\gamma : H
\longrightarrow \hu{K},$  the {\rm weight space} of $V$, (respect
to $H$), associated with $\gamma$ is the subspace
$$V_{\gamma} = \{v_{\gamma} \in V : h  v_{\gamma} = \gamma(h)v_{\gamma}\hspace{0.2cm} {\it
for} \hspace{0.2cm} {\it any} \hspace{0.2cm} h \in H\}.$$ The
elements $\gamma \in H^*$ satisfying $V_{\gamma} \neq 0$ are
called {\rm weights} of $V$ respect to $H$ and we denote
$\mathcal{P} := \{\gamma \in H^* \backslash \{0\} : V_{\gamma}
\neq 0\}$. We say that $V$ is a {\rm weight module}, respect to
$H$, if
$$V = V_0 \oplus (\bigoplus\limits_{\gamma \in \mathcal{P}}{V}_{\gamma}).$$
We also say that $\mathcal{P}$ is the {\rm weight system} of $V$.
\end{Definition}
The weight system $\mathcal{P}$ is called {\it symmetric} if for
any $\gamma \in \mathcal{P}$ we have that $-\gamma \in
\mathcal{P}$.

Split Lie algebras are examples of weight modules over themselves,
where
 $ \mathcal{P}=\Lambda$ and  $V_{\gamma} = L_{\gamma}$ for $\gamma \in \mathcal{P} \cup
 \{0\}$. Since the even part $L^0$ of the standard embedding of a
 split Lie triple system $T$ and of a split twisted inner derivation
 triple system $M$ is a split Lie algebra, the natural actions of
 $L^0$ over $T$ and $M$ make of $T$ and $M$ weight modules over
 the split Lie algebra $L^0$. So the present paper extend the
 results in \cite{Yoalg, Yotriple1, Twisted}.  We also have
 that any split Lie superalgebra, see \cite{Super}, $L=L^{\bar
 0} \oplus L^{\bar 1}$ is a weight module over the split Lie
 algebra $L^{\bar
 0}$. These examples provide us of interesting applications of the results in the paper which can be interpreted
 from a common viewpoint, the one of weight modules over a split Lie algebra. We devote  Section 6 to develop these examples in
 detail. We also remark in $\S 6$ future perspectives of the
 matter.

\section{Connections of weights. Decompositions of $V$}

From now on, (throughout  the paper), $$V = V_0 \oplus
(\bigoplus\limits_{\gamma \in \mathcal{P}}{V}_{\gamma})$$ denotes
a weight module with a symmetric weight system $\mathcal{P}$,
respect to a split Lie algebra $$L = H \oplus
(\bigoplus\limits_{\alpha \in \Lambda}L_{\alpha})$$ with a
symmetric root  system $\Lambda$.

\medskip

 In order to clarify the results along the paper let us
introduce a concrete example which is of potential interest  in
the modelling  of physical problems involving split Lie algebras
and some involutive characteristic. After any main result, we will
refer to this example  to illustrate it.

\begin{example}\label{example}
 Let ${\frak L}=H\oplus \bigoplus\limits_{\alpha \in
\Lambda} {\frak L}_{\alpha}$ be a split Lie algebra over a base
field of characteristic distinct to two with a symmetric root
system, and $\xi $ an involutive automorphism of ${\frak L}$.
Hence we can write
$${\frak L}=Sym({\frak L},\xi) \oplus Skw({\frak L},\xi),$$
where $Sym({\frak L},\xi)=\{e\in {\frak L}: \xi(e)=e\}$ and
$Skw({\frak L},\xi)=\{e\in {\frak L}: \xi(e)=-e\}.$
 Denote by $\Pi_0:{\frak L} \to
Sym({\frak L},\xi)$ and $\Pi_1:{\frak L} \to Skw({\frak L},\xi)$
the projection maps $\Pi_i(x_0+x_1)=x_i$. Suppose   $\xi(H)
\subset H$ and  $\Pi_i({\frak L}_{\alpha})\neq 0$ for any $\alpha
\in \Lambda \cup \{0\}$, $i \in \{0,1\}$. Then it is
straightforward to verify that $$L:=Sym({\frak L},\xi)$$ is a
split Lie algebra respect to the splitting Cartan subalgebra
$Sym(H,\xi)$ with set of nonzero roots
$\Lambda|_{Sym(H,\xi)}=\{{\alpha}|_{Sym(H,\xi)}: \alpha \in
\Lambda\}$ and with nonzero root spaces
$$L_{{\alpha}|_{Sym(H,\xi)}}=\Pi_0({\frak L}_{\alpha})$$
 and that
$$V:=Skw({\frak L},\xi )$$ is a
weight module respect to the split Lie algebra $L$ under the
natural action
$$Sym({\frak L},\xi) \times Skw({\frak L},\xi) \to Skw({\frak
L},\xi)$$
$$(x,y)\mapsto [x,y],$$
 with set of nonzero weights
$\mathcal{P}=\Lambda=\{{\alpha}|_{Sym(H,\xi)}: \alpha \in
\Lambda\}$, with nonzero weight spaces
$$V_{{\alpha}|_{Sym(H,\xi)}}=\Pi_1({\frak L}_{\alpha})$$
and with $V_0=Skw(H,\xi)$.
 \end{example}

 Let us return to our study of weight modules over split Lie algebras  by developing
connections of weights techniques in this framework.

\begin{Definition}\label{con}
Let $\gamma$ and $\delta$ be two nonzero weights. We say that
$\gamma$ is {\rm connected} to $\delta$ if there exist
$\alpha_1,...,\alpha_n \in \Lambda$ such that

\begin{enumerate}
\item[{\rm 1.}] $\{ \gamma+\alpha_1, \gamma+\alpha_1+\alpha_2,...,
\gamma+\alpha_1+\alpha_2+\cdots+\alpha_{n-1}\} \subset
\mathcal{P}$,

\item[{\rm 2.}] $\gamma+\alpha_1+\alpha_2+\dots+\alpha_n \in
\{\delta, -\delta\}$,
\end{enumerate}
where the sums are considered in $H^*$.

 We also say that
$\{\gamma,\alpha_1,..., \alpha_n\}$ is a {\rm connection} from
$\gamma$ to $\delta$.
\end{Definition}

\begin{note}\label{note}
For an easier notation, we will understand that $\{\gamma\}$ is a
connection from $\gamma$ to itself and to $-\gamma$.

\end{note}

The next result shows the connection relation is of equivalence.

\begin{proposition}\label{pro1}
 The relation $\sim$ in $\mathcal{P}$ defined
by $\gamma \sim \delta$ if and only if $\gamma$ is connected to
$\delta$ is an equivalence relation.
\end{proposition}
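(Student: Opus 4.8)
The plan is to verify directly the three defining properties of an equivalence relation---reflexivity, symmetry and transitivity---reading them off Definition \ref{con} and using throughout the standing hypotheses that both the weight system $\mathcal{P}$ and the root system $\Lambda$ are symmetric, together with the convention of Note \ref{note}. Reflexivity is immediate: by Note \ref{note} the singleton $\{\gamma\}$ is a connection from $\gamma$ to itself, so $\gamma \sim \gamma$ for every $\gamma \in \mathcal{P}$.

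Before treating symmetry and transitivity I would fix notation for the partial sums of a connection. Given a connection $\{\gamma, \alpha_1, \dots, \alpha_n\}$ from $\gamma$ to $\delta$, set $\gamma_0 := \gamma$ and $\gamma_k := \gamma + \alpha_1 + \cdots + \alpha_k$ for $1 \le k \le n$; then condition 1 of Definition \ref{con} says $\gamma_1, \dots, \gamma_{n-1} \in \mathcal{P}$ and condition 2 says $\gamma_n \in \{\delta, -\delta\}$. Two elementary operations drive the entire argument. \emph{Reversal}: reading the chain backwards from $\gamma_n$ through the roots $-\alpha_n, -\alpha_{n-1}, \dots, -\alpha_1$---each lying in $\Lambda$ because $\Lambda$ is symmetric---produces partial sums $\gamma_{n-1}, \dots, \gamma_1 \in \mathcal{P}$ and endpoint $\gamma_0 = \gamma$, hence a connection from $\gamma_n$ to $\gamma$. \emph{Negation}: replacing $\gamma$ and every $\alpha_i$ by $-\gamma$ and $-\alpha_i$ turns the partial sums into $-\gamma_1, \dots, -\gamma_{n-1}$, still in $\mathcal{P}$ since $\mathcal{P}$ is symmetric, and sends the endpoint $\pm\delta$ to $\mp\delta \in \{\delta, -\delta\}$; this yields a connection from $-\gamma$ to $\delta$.

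Symmetry then follows by distinguishing the sign $\epsilon$ with $\gamma_n = \epsilon\delta$: reversal already gives a connection from $\epsilon\delta$ to $\gamma$, which is what we want when $\epsilon = +1$, while for $\epsilon = -1$ one further negation converts the resulting connection from $-\delta$ into one from $\delta$; in either case $\delta \sim \gamma$. For transitivity I would concatenate connections $\{\gamma, \alpha_1, \dots, \alpha_n\}$ from $\gamma$ to $\delta$ and $\{\delta, \beta_1, \dots, \beta_m\}$ from $\delta$ to $\epsilon$: when $\gamma_n = \delta$ the juxtaposed sequence $\{\gamma, \alpha_1, \dots, \alpha_n, \beta_1, \dots, \beta_m\}$ is a connection from $\gamma$ to $\epsilon$, its only new intermediate partial sum being $\delta \in \mathcal{P}$; when $\gamma_n = -\delta$ I would first negate the second connection to obtain one from $-\delta$ to $\epsilon$ and only then concatenate. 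The main obstacle---and really the only subtle point---is exactly this $\pm$ ambiguity in condition 2: a connection remembers its endpoint only up to sign, so naive reversal or concatenation may land on $-\delta$ rather than $\delta$. The symmetry of $\mathcal{P}$ and of $\Lambda$ is precisely what absorbs these sign discrepancies, and phrasing everything in terms of the two operations above keeps the sign bookkeeping under control.
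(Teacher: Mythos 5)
Your proof is correct and follows essentially the same route as the paper's: reflexivity via Note \ref{note}, symmetry by reversing the chain with signs adjusted (your ``reversal'' and ``negation'' are exactly the paper's chains $\{\delta,-\alpha_n,\dots,-\alpha_1\}$ and $\{\delta,\alpha_n,\dots,\alpha_1\}$), and transitivity by concatenation, negating the second connection when the first lands on $-\delta$. The only cosmetic difference is that the paper explicitly separates the degenerate case $\delta\in\{\eta,-\eta\}$ (trivial second connection), which your concatenation absorbs implicitly.
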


\begin{proof}
By Note \ref{note}, $\{\gamma\}$ is a connection from $\gamma$ to
itself and therefore $\gamma \sim \gamma$.

Let us see the symmetric character of $\sim$: If $\gamma \sim
\delta$, there exists a connection
$\{\gamma,\alpha_1,...,\alpha_n\} $ from $\gamma$ to $\delta$,
being so
$$\{\gamma + \alpha_1, \gamma + \alpha_1 + \alpha_2, ... , \gamma
+ \alpha_1 + \cdots + \alpha_{n-1}\} \subset \mathcal{P}$$ and
$\gamma + \alpha_1 +\cdots+\alpha_n \in \{\delta, -\delta\}.$
Hence, we can distinguish two possibilities. In the first one
$\gamma + \alpha_1 + \cdots+ \alpha_n = \delta,$ and in the second
one $\gamma + \alpha_1 + \cdots + \alpha_n = -\delta.$
 Now  observe that   the
set $\{\delta,-\alpha_n,
 -\alpha_{n-1},  ... , -\alpha_1\}$  gives us a connection from  $\delta$ to $\gamma$ if we
have the first possibility and   $\{\delta,\alpha_n,
 \alpha_{n-1},  ... , \alpha_1\}$ if we have the second one. Hence  $\sim$ is symmetric.

Finally, suppose $\gamma \sim \delta$ and $\delta \sim \eta$, and
 write $\{\gamma,\alpha_1,...,\alpha_n\}$ for a connection from
$\gamma$ to $\delta$ and $\{\delta,\beta_1,...,\beta_m\}$ for a
connection from $\delta$ to $\eta$. If $\delta \neq \pm \eta$,
then $m \geq 1$ and  so $\{\gamma,
\alpha_1,...,\alpha_n,\beta_1,..., \beta_m\}$ is a connection from
$\gamma$ to $\eta$ in case $\gamma+\alpha_1+\cdots+
\alpha_n=\delta$, and $\{\gamma,\alpha_1,...,
\alpha_n,-\beta_1,...,-\beta_m\}$ in case $\gamma+\alpha_1+\cdots+
\alpha_n=-\delta$. If  $\delta \in \{\eta, -\eta\}$ then
$\{\gamma,\alpha_1,..., \alpha_n\}$ is a connection from $\gamma$
to $\eta$. Therefore $\gamma \sim \eta$ and $\sim$ is of
equivalence.
\end{proof}

Given $\gamma \in \mathcal{P}$, we denote by
$$\mathcal{P}_{\gamma} := \{\delta \in \mathcal{P} : \delta \sim \gamma\}.$$
Clearly if $\delta \in \mathcal{P}_{\gamma}$ then $-\delta \in
\mathcal{P}_{\gamma}$ and, by Proposition \ref{pro1}, if $\eta
\notin \mathcal{P}_{\gamma}$ then $\mathcal{P}_{\gamma} \cap
\mathcal{P}_{\eta} = \emptyset$.

Our next goal is to associate an (adequate) weight submodule
$V_{\mathcal{P}_{\gamma}}$ to any $\mathcal{P}_{\gamma}$. For
$\mathcal{P}_{\gamma}, \gamma \in \mathcal{P},$ we define the
following linear subspace of $V$:

$$V_{\mathcal{P}_{\gamma}} := (\sum\limits_{\alpha \in \Lambda \cap
\mathcal{P}_{\gamma}}L_{-\alpha}V_{\alpha}) \oplus
(\bigoplus\limits_{\delta \in \mathcal{P}_{\gamma}}V_{\delta}).$$

(We also denote by
$V_{0,\mathcal{P}_{\gamma}}:=\sum\limits_{\alpha \in \Lambda \cap
\mathcal{P}_{\gamma}}L_{-\alpha}V_{\alpha} \subset V_0$).

\begin{lemma}\label{pag5}

The following assertions hold:
\begin{enumerate}
\item  For any $\alpha \in \Lambda$ and  $\gamma \in \mathcal{P}$
with  $\alpha \neq - \gamma$, if   $L_{\alpha}V_{\gamma} \neq 0$
then $\alpha+ \gamma \sim \gamma$.

\item For any $\alpha, \beta \in \Lambda \cap \mathcal{P}$, if
$L_{\beta}(L_{-\alpha}V_{\alpha}) \neq 0$  then $\alpha \sim
\beta$.
\end{enumerate}
\end{lemma}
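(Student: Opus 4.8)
The plan is to reduce both assertions to one fundamental observation about how the action moves weights: for $\alpha \in \Lambda \cup \{0\}$ and $\gamma \in \mathcal{P}\cup\{0\}$ one has $L_\alpha V_\gamma \subset V_{\alpha+\gamma}$, with the convention $V_\mu = 0$ whenever $\mu \notin \mathcal{P}\cup\{0\}$. This is a one-line check from the module identity $x(yv)=[x,y]v+y(xv)$ together with the definitions of $L_\alpha$ and $V_\gamma$, and it is precisely what makes each $L_{-\alpha}V_\alpha$ land in $V_0$, as already recorded after the definition of $V_{\mathcal{P}_\gamma}$. Once this inclusion is in hand, both statements follow by combining it with the symmetry of $\Lambda$ and of $\mathcal{P}$ and with the single-step connections sanctioned by Definition~\ref{con} and Note~\ref{note}.

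For the first assertion, I would first note that $0 \neq L_\alpha V_\gamma \subset V_{\alpha+\gamma}$ forces $V_{\alpha+\gamma}\neq 0$; since $\alpha \neq -\gamma$ gives $\alpha+\gamma\neq 0$, we get $\alpha+\gamma \in \mathcal{P}$, so that the relation $\sim$ is genuinely defined on $\alpha+\gamma$. Now $\{\gamma,\alpha\}$ is a connection from $\gamma$ to $\alpha+\gamma$: condition 1 of Definition~\ref{con} is vacuous for a length-one chain, while condition 2 holds because $\gamma+\alpha=\alpha+\gamma\in\{\alpha+\gamma,-(\alpha+\gamma)\}$ and $\alpha\in\Lambda$. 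Hence $\gamma\sim\alpha+\gamma$, and by the symmetry of $\sim$ (Proposition~\ref{pro1}) we conclude $\alpha+\gamma\sim\gamma$.

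For the second assertion, the idea is to localise the non-vanishing to a single product and then expand it. Since $L_{-\alpha}V_\alpha \subset V_0$ and $L_\beta V_0\subset V_\beta$, and $L_\beta(L_{-\alpha}V_\alpha)$ is spanned by elements of the form $x_\beta(y_{-\alpha}v_\alpha)$, the hypothesis produces $x_\beta\in L_\beta$, $y_{-\alpha}\in L_{-\alpha}$ and $v_\alpha\in V_\alpha$ with $x_\beta(y_{-\alpha}v_\alpha)\neq 0$. The module identity then gives
$$x_\beta(y_{-\alpha}v_\alpha)=[x_\beta,y_{-\alpha}]v_\alpha+y_{-\alpha}(x_\beta v_\alpha),$$
so at least one summand is nonzero. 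If the first is, then, as $[x_\beta,y_{-\alpha}]\in L_{\beta-\alpha}$, we have $L_{\beta-\alpha}V_\alpha\neq 0$; either $\beta=\alpha$, whence $\alpha\sim\beta$ by reflexivity, or $\beta-\alpha\in\Lambda$ with $\beta-\alpha\neq-\alpha$, and the first assertion applied to the root $\beta-\alpha$ and the weight $\alpha$ yields $\beta=(\beta-\alpha)+\alpha\sim\alpha$. If instead the second summand is nonzero, then $x_\beta v_\alpha\neq 0$ gives $L_\beta V_\alpha\neq 0$, hence $V_{\alpha+\beta}\neq 0$; either $\alpha+\beta=0$, i.e. $\beta=-\alpha$, whence $\alpha\sim-\alpha=\beta$ by Note~\ref{note}, or $\alpha+\beta\in\mathcal{P}$, in which case the single-step connections $\{\alpha,\beta\}$ and $\{\beta,\alpha\}$ give $\alpha\sim\alpha+\beta$ and $\beta\sim\alpha+\beta$, so $\alpha\sim\beta$ by transitivity. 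In every case $\alpha\sim\beta$.

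The point demanding the most care is that $\sim$ lives only on the nonzero weights, so the chains we build must never pass through $0$; this is exactly why the degenerate configurations $\beta=\alpha$ and $\beta=-\alpha$ are isolated and dispatched by reflexivity and by Note~\ref{note} rather than by invoking the first assertion, whose applicability would be spoiled by a vanishing intermediate weight. Pinning down the weight-shifting inclusion with the correct sign, and the reduction from the spanning sum $L_\beta(L_{-\alpha}V_\alpha)$ to a single product on which $x_\beta$ acts nontrivially, are the only genuinely delicate steps; the remainder is a short case check.
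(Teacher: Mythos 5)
Your proof is correct and takes essentially the same route as the paper: both arguments use the module identity to split $L_{\beta}(L_{-\alpha}V_{\alpha})\neq 0$ into the dichotomy $[L_{\beta},L_{-\alpha}]V_{\alpha}\neq 0$ or $L_{-\alpha}(L_{\beta}V_{\alpha})\neq 0$, and then produce short explicit connections (the paper writes $\{\alpha,\beta-\alpha\}$ and $\{\alpha,\beta,-\alpha\}$ directly, where you reach the same conclusion by invoking assertion 1 together with the symmetry and transitivity of $\sim$). The only cosmetic difference is that the paper dispatches the degenerate cases $\beta=\pm\alpha$ upfront as clear, while you isolate $\beta=\alpha$ and $\beta=-\alpha$ inside the two branches.
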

\begin{proof}
1. The fact $L_{\alpha}V_{\gamma} \neq 0$ with  $\alpha \neq -
\gamma$ ensures $\alpha + \gamma \in \mathcal{P}$. Hence, just
consider the connection $\{\gamma, \alpha\}$.

2. If $\beta = \pm \alpha$ it is clear. Hence suppose $\beta \neq
\pm \alpha$. From $L_{\beta}(L_{-\alpha}V_{\alpha}) \neq 0$ we
have either $[L_{\beta},L_{-\alpha}]V_{\alpha} \neq 0$  or
$L_{-\alpha}(L_{\beta}V_{\alpha}) \neq 0$. In the first case
$\beta-\alpha \in \Lambda$ and $\{\alpha, \beta-\alpha\}$ is a
connection form $\alpha$ to $\beta$. In the second case we have
$\alpha + \beta \in \mathcal{P}$ and then $\{\alpha, \beta,
-\alpha\}$ is a connection form $\alpha$ to $\beta$.
\end{proof}

We recall that a Lie module $V$ is said to be {\it simple} if its
only submodules are $\{0\}$ and $V$.

\begin{theorem}\label{teo1}
Let $\gamma \in \mathcal{P}$. Then the following assertions hold.

\begin{enumerate}
\item[{\rm 1.}] $V_{\mathcal{P}_{\gamma}}$ is a weight submodule
of $V$.

\item[{\rm 2.}] If $V$ is simple, then there exists a connection
from $\gamma$ to $\delta$ for any $\gamma, \delta \in \mathcal{P}$
and $V_0 = \sum\limits_{\alpha \in \Lambda \cap
\mathcal{P}_{\gamma}}L_{-\alpha}V_{\alpha}$.
\end{enumerate}
\end{theorem}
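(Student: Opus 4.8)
The plan is to prove the two assertions of Theorem \ref{teo1} separately, with Part 1 providing the structural foundation that Part 2 will exploit.

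\medskip

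\noindent\textbf{Part 1: $V_{\mathcal{P}_{\gamma}}$ is a weight submodule.}
First I would observe that $V_{\mathcal{P}_{\gamma}}$ is by construction a sum of weight spaces (together with a piece sitting inside $V_0$), so it is automatically graded by $H$; hence to show it is a weight submodule it suffices to verify it is closed under the action of $L = H \oplus \bigoplus_{\alpha \in \Lambda} L_\alpha$. Closure under $H$ is immediate since each summand is a weight space. The real content is closure under each $L_\alpha$, and I would break this into the two natural cases matching the two summands of $V_{\mathcal{P}_{\gamma}}$. The plan is to show:
\begin{equation*}
L_\alpha V_{\mathcal{P}_{\gamma}} \subset V_{\mathcal{P}_{\gamma}}
\quad\text{for every } \alpha \in \Lambda.
\end{equation*}
For the action on a weight space $V_\delta$ with $\delta \in \mathcal{P}_\gamma$: if $\alpha + \delta \neq 0$ then $L_\alpha V_\delta \subset V_{\alpha+\delta}$, and Lemma \ref{pag5}(1) gives $\alpha + \delta \sim \delta \sim \gamma$, so $\alpha + \delta \in \mathcal{P}_\gamma$ and the image lands in $\bigoplus_{\eta \in \mathcal{P}_\gamma} V_\eta$; if instead $\alpha = -\delta$, then $L_{-\delta} V_\delta \subset V_0$ and, since $\delta \in \Lambda \cap \mathcal{P}_\gamma$, this lands precisely in the first summand $V_{0,\mathcal{P}_\gamma} = \sum_{\beta \in \Lambda \cap \mathcal{P}_\gamma} L_{-\beta} V_\beta$. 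For the action on the $V_0$-piece: I would consider $L_\alpha(L_{-\beta}V_\beta)$ for $\beta \in \Lambda \cap \mathcal{P}_\gamma$. Note $L_\alpha(L_{-\beta}V_\beta) \subset V_\alpha$ as a subspace of $V$. If $L_\alpha(L_{-\beta}V_\beta) = 0$ there is nothing to check; otherwise Lemma \ref{pag5}(2) (applied with the roles $\alpha,\beta$ suitably identified) forces $\alpha \sim \beta \sim \gamma$, so $\alpha \in \mathcal{P}_\gamma$ and the image lies in $V_\alpha \subset \bigoplus_{\eta \in \mathcal{P}_\gamma} V_\eta$. This exhausts all cases and establishes closure.

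\medskip

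\noindent\textbf{Part 2: simplicity forces a single connection class and $V_0 = V_{0,\mathcal{P}_\gamma}$.}
The key idea is that Part 1 produces, for each $\gamma$, a nonzero weight submodule $V_{\mathcal{P}_\gamma}$ (it is nonzero because it contains $V_\gamma \neq 0$). When $V$ is simple, any nonzero submodule must equal $V$, so I would argue $V_{\mathcal{P}_\gamma} = V$. Comparing the weight-space decompositions, this equality forces $\mathcal{P}_\gamma = \mathcal{P}$ (every weight is connected to $\gamma$, hence any two weights are connected to each other by symmetry and transitivity of $\sim$ from Proposition \ref{pro1}) and simultaneously forces the $V_0$-part of $V_{\mathcal{P}_\gamma}$ to equal all of $V_0$, i.e. $V_0 = \sum_{\alpha \in \Lambda \cap \mathcal{P}_\gamma} L_{-\alpha} V_\alpha$. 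Matching the two graded decompositions is clean precisely because both are direct sums indexed by weights, so equality of the whole spaces is equivalent to equality summand-by-summand.

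\medskip

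\noindent\textbf{Anticipated main obstacle.}
The step I expect to require the most care is verifying closure of the $V_0$-piece under $L_\alpha$ in Part 1, specifically matching the hypotheses of Lemma \ref{pag5}(2) to the expression $L_\alpha(L_{-\beta}V_\beta)$: the lemma is stated for $L_\beta(L_{-\alpha}V_\alpha)$, so I must relabel indices correctly and confirm that the nonvanishing of $L_\alpha(L_{-\beta}V_\beta)$ genuinely triggers the connection $\alpha \sim \beta$ rather than some degenerate case. A secondary subtlety is the boundary case $\alpha = -\delta$ in the action on $V_\delta$, where the image escapes the weight-space summand and must be tracked into $V_{0,\mathcal{P}_\gamma}$; one must check it lands in that specific subspace and not merely somewhere in $V_0$. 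Once these bookkeeping points are handled, the simplicity argument in Part 2 is essentially formal.
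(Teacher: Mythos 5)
Your proposal is correct and follows essentially the same route as the paper: the same case analysis for closure under each $L_\alpha$ (using Lemma \ref{pag5}(1) for $\alpha+\delta\neq 0$, tracking $L_{-\delta}V_\delta$ into $V_{0,\mathcal{P}_\gamma}$ via symmetry of $\Lambda$, and using the containment $L_\alpha(L_{-\beta}V_\beta)\subset V_\alpha$ to legitimize Lemma \ref{pag5}(2)), followed by the same formal simplicity argument for Part 2. The only cosmetic difference is that the paper expands a generic $e\in L$ and $v\in V_{\mathcal{P}_\gamma}$ into components before running the identical case analysis.
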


\begin{proof}
1. For any $e\in L$ and for any $v \in V_{\mathcal{P}_{\gamma}}$
we can write  $e = h + \sum\limits_{i=1}^ne_{\alpha_i}$, with $h
\in H$,
 $e_{\alpha_i} \in L_{\alpha_i}$ and $\alpha_i \in \Lambda$; and  $v = v_0
+ \sum\limits_{j=1}^mv_{\gamma_j}$ with $v_0 \in
\sum\limits_{\alpha \in \Lambda \cap
\mathcal{P}_{\gamma}}L_{-\alpha}V_{\alpha}$, $v_{\gamma_j} \in
V_{\gamma_j},$ and $ \gamma_j \in \mathcal{P}_{\gamma}.$

From here

\begin{equation}\label{submodulo_proof}
ev   \in (\bigoplus\limits_{j=1}^mV_{\gamma_j}) +
(\sum\limits_{i=1}^nL_{\alpha_i}(L_{-\alpha}V_{\alpha})) +
(\bigoplus\limits_{i=1,j=1}^{n,m}L_{\alpha_i}V_{\gamma_j}).
\end{equation}
 If $L_{\alpha_i}(L_{-\alpha}V_{\alpha})
\neq 0$, then  $\alpha_i \in \mathcal{P}$ and so Lemma
\ref{pag5}-2 gives us  $\alpha \sim \alpha_i$. We have   $\alpha_i
\in \mathcal{P}_{\gamma}$ and we get
$$\sum\limits_{i=1}^nL_{\alpha_i}(L_{-\alpha}V_{\alpha}) \subset
\sum\limits_{i=1}^nV_{\alpha_i} \subset
V_{\mathcal{P}_{\gamma}}.$$ Consider now the third summand in
(\ref{submodulo_proof}). If $\alpha_i=-\gamma_j$ then

\begin{equation}\label{eq2}
L_{\alpha_i}V_{\gamma_j}=L_{-\gamma_j}V_{\gamma_j} \subset
V_{\mathcal{P}_{\gamma}}.
\end{equation}
 If $\alpha_i \neq-\gamma_j$, Lemma
\ref{pag5}-1 gives us   $\alpha_i + \gamma_j \sim \gamma_j $.
Hence $\alpha_i + \gamma_j \in \mathcal{P}_{\gamma}$ and so
$$L_{\alpha_i}V_{\gamma_j}\subset V_{\alpha_i+\gamma_j}
\subset V_{\mathcal{P}_{\gamma}}.$$ From here,  and taking into
account equation (\ref{eq2}), we get
$\bigoplus\limits_{i=1,j=1}^{n,m}L_{\alpha_i}V_{\gamma_j} \subset
V_{\mathcal{P}_{\gamma}}$ which  completes the proof of 1.

2. The simplicity of $V$ implies $V_{\mathcal{P}_{\gamma}} = V$
for any $\gamma \in \mathcal{P}$. Therefore $\mathcal{P}_{\gamma}
= \mathcal{P}$,  and so $V$ has all its nonzero weights connected,
and $V_0 = \sum\limits_{\alpha \in \Lambda \cap
\mathcal{P}_{\gamma}}L_{-\alpha}V_{\alpha}$.
\end{proof}

Theorem \ref{teo1}-1 let us assert that for any $\gamma \in
\mathcal{P}$, $V_{\mathcal{P}_{\gamma}}$ is a weight submodule of
$V$ that we call the submodule of $V$ {\it associated} to
$\mathcal{P}_{\gamma}$.

\begin{proposition}\label{teo2}
For a linear  complement $\mathcal{U}$ of $span_{\hu
K}\{L_{-\alpha}V_{\alpha} : \alpha \in \Lambda \cap \mathcal{P}\}$
in $V_0$, we have the sum of ${\hu K}$-vector subspaces

$$V = \mathcal{U} + (\sum\limits_{[\gamma] \in \mathcal{P}/\sim} V_{[\gamma]}),$$

where any $V_{[\gamma]}$ is one of the weight submodules described
in Theorem \ref{teo1}-1.
\end{proposition}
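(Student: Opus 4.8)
The plan is to show that the weight module $V$ decomposes as a sum of the subspace $\mathcal{U}$ together with the weight submodules $V_{\mathcal{P}_{\gamma}}$ coming from the equivalence classes of $\sim$. The natural starting point is the defining decomposition $V = V_0 \oplus (\bigoplus_{\gamma \in \mathcal{P}} V_{\gamma})$. I would handle the two types of weight spaces separately: the ``nonzero'' part $\bigoplus_{\gamma \in \mathcal{P}} V_{\gamma}$ and the ``zero'' part $V_0$.

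\medskip

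First I would observe that every nonzero weight space $V_{\delta}$ with $\delta \in \mathcal{P}$ is automatically contained in exactly one of the $V_{[\gamma]}$, namely the one indexed by the class $[\delta]$ of $\delta$ under $\sim$. Indeed, by the very definition $V_{\mathcal{P}_{\gamma}} = (\sum_{\alpha \in \Lambda \cap \mathcal{P}_{\gamma}} L_{-\alpha}V_{\alpha}) \oplus (\bigoplus_{\delta \in \mathcal{P}_{\gamma}} V_{\delta})$, so $V_{\delta} \subset V_{[\delta]}$ whenever $\delta \in \mathcal{P}$. Since Proposition \ref{pro1} establishes that $\sim$ is an equivalence relation, the classes $[\gamma] \in \mathcal{P}/\!\sim$ partition $\mathcal{P}$, and hence summing over all classes we recover
$$\bigoplus_{\gamma \in \mathcal{P}} V_{\gamma} = \sum_{[\gamma] \in \mathcal{P}/\sim} \Bigl(\bigoplus_{\delta \in \mathcal{P}_{\gamma}} V_{\delta}\Bigr) \subset \sum_{[\gamma] \in \mathcal{P}/\sim} V_{[\gamma]}.$$

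\medskip

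Next I would treat $V_0$. By the choice of $\mathcal{U}$ as a linear complement of $\operatorname{span}_{\hu K}\{L_{-\alpha}V_{\alpha} : \alpha \in \Lambda \cap \mathcal{P}\}$ in $V_0$, we can write $V_0 = \mathcal{U} + \operatorname{span}_{\hu K}\{L_{-\alpha}V_{\alpha} : \alpha \in \Lambda \cap \mathcal{P}\}$. Now each generator $L_{-\alpha}V_{\alpha}$ with $\alpha \in \Lambda \cap \mathcal{P}$ sits inside the summand $V_{0,\mathcal{P}_{\alpha}} \subset V_{[\alpha]}$ corresponding to the class of $\alpha$, because $\alpha \in \Lambda \cap \mathcal{P}_{\alpha}$ trivially (each $\alpha$ is connected to itself). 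Thus the whole spanning set, and hence the span, is contained in $\sum_{[\gamma] \in \mathcal{P}/\sim} V_{[\gamma]}$, giving $V_0 \subset \mathcal{U} + \sum_{[\gamma] \in \mathcal{P}/\sim} V_{[\gamma]}$. Combining this with the previous paragraph and the ambient decomposition of $V$ yields $V = \mathcal{U} + \sum_{[\gamma] \in \mathcal{P}/\sim} V_{[\gamma]}$, which is exactly the claimed sum; the reverse inclusion is clear since each piece lives in $V$.

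\medskip

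I do not anticipate a serious obstacle here, since the statement is a sum (not a direct sum) and the real work has already been done in establishing that each $V_{\mathcal{P}_{\gamma}}$ is a genuine weight submodule (Theorem \ref{teo1}-1) and that $\sim$ is an equivalence relation (Proposition \ref{pro1}). The only point requiring mild care is bookkeeping: making sure that the $L_{-\alpha}V_{\alpha}$ pieces of $V_0$ are correctly absorbed into the submodules indexed by $[\alpha]$ rather than being left over, and that the complement $\mathcal{U}$ captures precisely the part of $V_0$ not reachable from any weight space. Because the statement asserts only a (possibly non-direct) sum, I need not verify that the sum is direct or that the $V_{[\gamma]}$ intersect trivially, which sidesteps the most delicate linear-algebra issue.
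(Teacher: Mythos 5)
Your proof is correct and follows essentially the same route as the paper: the paper's own argument invokes Proposition \ref{pro1} for the quotient set and Theorem \ref{teo1}-1 for the submodule structure, and then concludes the sum in one ``therefore''; your write-up simply makes explicit the bookkeeping hidden in that step (each $V_{\delta}$ lies in $V_{[\delta]}$, and each $L_{-\alpha}V_{\alpha}$ lies in $V_{[\alpha]}$ by reflexivity of $\sim$, with $\mathcal{U}$ supplying the rest of $V_0$). No gap; the extra detail is a faithful expansion of the paper's proof.
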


\begin{proof}
By Proposition \ref{pro1}, we can consider the quotient set
$$\mathcal{P}/\sim:= \{[\gamma] : \gamma \in \mathcal{P}\}.$$ Let us
denote by $V_{[\gamma]} := V_{\mathcal{P}_{\gamma}}$. We have
$V_{[\gamma]}$ is well defined and by Theorem \ref{teo1}-1 a
weight  submodule of $V$. Therefore $V = \mathcal{U} +
(\sum\limits_{[\gamma] \in \mathcal{P}/\sim} V_{[\gamma]}).$
\end{proof}

Recall that the {\it center} of $V$ is defined as  the set
${\mathcal Z}(V)=\{v \in V: L  v = 0\}$.

\begin{theorem}\label{corolario}
If $LV = V$ and ${\mathcal Z}(V)=0$, then $V$ is the direct sum of
the weight submodules  given in Proposition \ref{teo2}, $$V =
\bigoplus\limits_{[\gamma] \in \mathcal{P}/\sim} V_{[\gamma]}.$$
\end{theorem}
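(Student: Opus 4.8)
The plan is to unpack the two assertions concealed in the phrase ``direct sum'': first that the sum over the equivalence classes already exhausts $V$, i.e. that the linear complement $\mathcal{U}$ of Proposition \ref{teo2} may be taken to be zero, and second that the sum $\sum_{[\gamma]\in\mathcal{P}/\sim} V_{[\gamma]}$ is direct. The two hypotheses play complementary roles: $LV=V$ is what annihilates $\mathcal{U}$, whereas ${\mathcal Z}(V)=0$ is what secures directness on the delicate zero-weight part. By Proposition \ref{teo2} we already have $V = \mathcal{U} + (\sum_{[\gamma]} V_{[\gamma]})$ with each $V_{[\gamma]}$ a weight submodule, so only these two refinements remain.

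For the first point I would compute the weight-zero component of $LV$. Since each product $L_{\alpha}V_{\gamma}$ lies in $V_{\alpha+\gamma}$, since $HV_0=0$, and since $L_{\alpha}V_0\subset V_{\alpha}$ contributes nothing to weight $0$ for $\alpha\neq 0$, the only contributions to $V_0$ inside $LV$ come from the products $L_{\alpha}V_{-\alpha}$. Reindexing, the weight-zero part of $LV$ equals $\sum_{\alpha\in\Lambda\cap\mathcal{P}}L_{-\alpha}V_{\alpha}$. As $LV=V$ forces this to coincide with $V_0$, we obtain $V_0=span_{\hu K}\{L_{-\alpha}V_{\alpha}:\alpha\in\Lambda\cap\mathcal{P}\}$, so $\mathcal{U}=0$ and $V=\sum_{[\gamma]\in\mathcal{P}/\sim}V_{[\gamma]}$.

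For directness I would take a vanishing sum $\sum_{[\gamma]}v_{[\gamma]}=0$ and split each summand along $V_{[\gamma]}=V_{0,\mathcal{P}_{\gamma}}\oplus(\bigoplus_{\delta\in\mathcal{P}_{\gamma}}V_{\delta})$ as $v_{[\gamma]}=u_{[\gamma]}+w_{[\gamma]}$ with $u_{[\gamma]}\in V_0$ and $w_{[\gamma]}\in\bigoplus_{\delta\in\mathcal{P}_{\gamma}}V_{\delta}$. The nonzero-weight pieces give no trouble: because the classes $\mathcal{P}_{\gamma}$ partition $\mathcal{P}$ (Proposition \ref{pro1}) and $V=V_0\oplus(\bigoplus_{\delta\in\mathcal{P}}V_{\delta})$ is already a direct sum, projecting onto each $V_{\delta}$ forces every $w_{[\gamma]}=0$ separately; the remaining zero-weight components then satisfy $\sum_{[\gamma]}u_{[\gamma]}=0$.

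The hard part will be showing each $u_{[\gamma]}=0$, since all these vectors sit inside the single space $V_0$ and could a priori overlap; this is precisely where ${\mathcal Z}(V)=0$ enters. Fixing a class $[\gamma_0]$, I would apply an arbitrary root vector $x_{\alpha}\in L_{\alpha}$ to the relation $\sum_{[\gamma]}u_{[\gamma]}=0$. As $u_{[\gamma]}\in V_0$ we have $x_{\alpha}u_{[\gamma]}\in V_{\alpha}$, while $V_{[\gamma]}$ being a submodule (Theorem \ref{teo1}-1) gives also $x_{\alpha}u_{[\gamma]}\in V_{[\gamma]}$; but $V_{[\gamma]}\cap V_{\alpha}=0$ whenever $\alpha\notin\mathcal{P}_{\gamma}$, so $x_{\alpha}u_{[\gamma]}$ can survive only for the unique class containing $\alpha$. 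Since the classes are disjoint, at most one term of $\sum_{[\gamma]}x_{\alpha}u_{[\gamma]}=0$ is nonzero, forcing $x_{\alpha}u_{[\gamma_0]}=0$ for every $\alpha$ (when $\alpha\in\mathcal{P}_{\gamma_0}$ this is read off from the relation, and when $\alpha\notin\mathcal{P}_{\gamma_0}$ it vanishes outright). Together with $Hu_{[\gamma_0]}=0$ this yields $Lu_{[\gamma_0]}=0$, i.e. $u_{[\gamma_0]}\in{\mathcal Z}(V)=0$. Running this over all classes delivers directness, which combined with $\mathcal{U}=0$ completes the proof.
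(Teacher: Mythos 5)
Your proof is correct, and its skeleton matches the paper's: use $LV=V$ to force $\mathcal{U}=0$ in Proposition \ref{teo2}, observe that directness can only fail in the zero-weight components, and bring in ${\mathcal Z}(V)=0$ exactly there. The difference lies in how the zero-weight step is closed. The paper argues by contradiction: given a nonzero $v_0$ in the intersection of $V_{0,[\gamma]}$ with the sum of the other $V_{0,[\gamma_i]}$, centerlessness produces a root vector $e_{\alpha}$ with $e_{\alpha}v_0\neq 0$, and then Lemma \ref{pag5}-2 is invoked twice (once for each way of writing $v_0$ as a sum of elements of spaces $L_{-\beta}V_{\beta}$) to conclude $\alpha\sim\gamma$ and $\alpha\sim\gamma_i$, contradicting disjointness of the classes. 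You instead argue directly: using that each $V_{[\gamma]}$ is a submodule (Theorem \ref{teo1}-1) together with $L_{\alpha}V_0\subset V_{\alpha}$ and the disjointness of the weight supports, you show every root vector annihilates each zero-weight component $u_{[\gamma_0]}$, so $u_{[\gamma_0]}\in{\mathcal Z}(V)=0$. The two arguments are essentially contrapositives of one another, but yours is more modular: it reuses the already-proved submodule property rather than re-running the connection machinery of Lemma \ref{pag5}-2 (which, of course, is still present implicitly, since it underlies Theorem \ref{teo1}-1). You also spell out the computation of the weight-zero part of $LV$, which the paper compresses into ``it is clear''; that added detail is worth keeping.
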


\begin{proof}
Taking into account Proposition \ref{teo2}, it is clear that the
fact  $LV = V$  gives us $V = \sum\limits_{[\gamma] \in
\mathcal{P}/\sim} V_{[\gamma]}.$ To verify the direct character of
this sum,  we just have to prove that the sum $V_0=
\sum\limits_{[\gamma] \in \mathcal{P}/\sim}V_{0,[{\gamma}]}$,
where $V_{0,[{\gamma}]}=\sum\limits_{\beta \in \Lambda \cap
[{\gamma}]} L_{-\beta}V_{\beta}$ is direct. So, take $v_0 \in
V_{0,[{\gamma}]} \cap (\sum\limits_{[\gamma_i] \in
\mathcal{P}/\sim \setminus [\gamma]}V_{0,[{\gamma_i}]})$. Since
${\mathcal Z}(V)=0$, $L$ is split and $HV_0=0$, there exists
$0\neq e_{\alpha} \in L_{\alpha}$, $\alpha \in \Lambda$, such that
$e_{\alpha} v_0 \neq 0$, being then $\alpha \in \Lambda \cap
\mathcal{P}$. By Lemma \ref{pag5}-2,  we have $\alpha \sim \gamma$
and $\alpha \sim \gamma_i$,  for some $i$,  with $[\gamma] \neq
[\gamma_i]$ which is a contradiction. Therefore we can write $V =
\bigoplus\limits_{[\gamma] \in \mathcal{P}/\sim} V_{[\gamma]}.$
\end{proof}

\medskip

Let us concrete this result in Example \ref{example}. We have that
in case $$[Sym({\frak L},\xi),Skw({\frak L},\xi)]=Skw({\frak
L},\xi)$$ and $\{v \in Skw({\frak L},\xi):[Sym({\frak
L},\xi),v]\}=0$, we can decompose  $Skw({\frak L},\xi)$ by means
of an adequate family of weight submodules as
$$Skw({\frak L},\xi)=
\bigoplus\limits_{[\alpha] \in \Lambda|_{Sym(H,\xi)/\sim}}
Skw({\frak L},\xi)_{[\alpha]}.$$

\section{Connections of roots. Decompositions of $L$}

We begin this section by introducing a concept of connections of
roots for $L$ in a slightly different way to the one of connection
of weights for $\mathcal{P}$ developed in the previous section. To
do that, we will connect the nonzero roots of $L$ through nonzero
roots of $L$ and nonzero weights of $\mathcal{P}$ considered both
as elements in $H^*$.


\begin{Definition}\label{def nueva conec}
Let $\alpha, \beta$ be two nonzero roots of $L$. We say that
$\alpha$ is {\rm connected} to $\beta$ if there exist
$\zeta_1,...,\zeta_n \in {\Lambda}\cup \mathcal{P}$ such that

\begin{enumerate}
\item[{\rm 1.}] $\alpha = \zeta_1$,

\item[{\rm 1.}] $\{\zeta_1, \zeta_1 + \zeta_2, ... , \zeta_1 +
\cdots + \zeta_{n-1}\} \subset \Lambda \cup \mathcal{P}$,

\item[{\rm 2.}] $\zeta_1+\cdots+\zeta_n \in \{\beta, -\beta\}$,
\end{enumerate}
where the sums are considered in $H^*$.

 We also say that
$\{\zeta_1,...,\zeta_n\}$ is a {\rm connection} from $\alpha$ to
$\beta$.
\end{Definition}

We can prove that the connection relation is of equivalence in
$\Lambda$ by arguing as in the proof of Proposition \ref{pro1}. So
we can assert.

\begin{proposition}\label{rel_eq_nueva_conec}
The relation $\approx$ in $\Lambda$ defined by $\alpha \approx
\beta$ if and only if $\alpha$ is connected to $\beta$ is an
equivalence relation.
\end{proposition}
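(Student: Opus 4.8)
The plan is to follow the proof of Proposition \ref{pro1} almost verbatim, verifying reflexivity, symmetry and transitivity of $\approx$ in turn; the only genuinely new ingredient is that the linking elements $\zeta_i$ now range over the set $\Lambda \cup \mathcal{P}$ rather than over $\Lambda$ alone, and both $\Lambda$ and $\mathcal{P}$ are symmetric. For reflexivity I would observe that the one-term family $\{\alpha\}$ (the case $n=1$ of Definition \ref{def nueva conec}, for which the partial-sum condition is vacuous and $\zeta_1 + \cdots + \zeta_n = \alpha \in \{\alpha, -\alpha\}$) is a connection from $\alpha$ to itself, so $\alpha \approx \alpha$.

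For symmetry, suppose $\{\zeta_1, \ldots, \zeta_n\}$ is a connection from $\alpha$ to $\beta$, so that $\zeta_1 = \alpha$, every partial sum $\zeta_1 + \cdots + \zeta_k$ with $1 \le k \le n-1$ lies in $\Lambda \cup \mathcal{P}$, and $\zeta_1 + \cdots + \zeta_n \in \{\beta, -\beta\}$. I would reverse the chain to read off a connection from $\beta$ to $\alpha$: when $\zeta_1 + \cdots + \zeta_n = \beta$ I take $\{\beta, -\zeta_n, -\zeta_{n-1}, \ldots, -\zeta_2\}$, and when $\zeta_1 + \cdots + \zeta_n = -\beta$ I take $\{\beta, \zeta_n, \zeta_{n-1}, \ldots, \zeta_2\}$. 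A telescoping computation shows that the $k$-th partial sum of the reversed chain equals $\pm(\zeta_1 + \cdots + \zeta_{n-k+1})$, hence lies in $\Lambda \cup \mathcal{P}$ because each $\zeta_1 + \cdots + \zeta_j$ with $j \le n-1$ does and because $\Lambda \cup \mathcal{P}$ is symmetric; its total sum is $\pm\alpha$, so $\beta \approx \alpha$.

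For transitivity, given connections $\{\zeta_1, \ldots, \zeta_n\}$ from $\alpha$ to $\beta$ and $\{\xi_1, \ldots, \xi_m\}$ from $\beta$ to $\eta$ (so $\zeta_1 = \alpha$ and $\xi_1 = \beta$), I would first dispose of the case $\beta \in \{\eta, -\eta\}$, in which $\{\zeta_1, \ldots, \zeta_n\}$ already connects $\alpha$ to $\eta$. Otherwise $m \ge 2$, and I would glue the chains: $\{\zeta_1, \ldots, \zeta_n, \xi_2, \ldots, \xi_m\}$ works when $\zeta_1 + \cdots + \zeta_n = \beta$, while $\{\zeta_1, \ldots, \zeta_n, -\xi_2, \ldots, -\xi_m\}$ works when $\zeta_1 + \cdots + \zeta_n = -\beta$. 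The intermediate partial sums of the glued chain are exactly the $\zeta$-partial sums followed by terms $\pm(\xi_1 + \cdots + \xi_{j+1})$, all lying in $\Lambda \cup \mathcal{P}$ by symmetry, and its total sum is $\pm\eta$; hence $\alpha \approx \eta$.

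The only point requiring care, and the single feature distinguishing this from Proposition \ref{pro1}, is the sign bookkeeping in the symmetry and transitivity steps: the reversed or shifted partial sums acquire a global sign $-1$ precisely along the $\{-\beta\}$ branch, and keeping them inside $\Lambda \cup \mathcal{P}$ there relies on the symmetry of \emph{both} the root system $\Lambda$ and the weight system $\mathcal{P}$. Once that symmetry is invoked, the verifications reduce to the same telescoping identities already used in Proposition \ref{pro1}.
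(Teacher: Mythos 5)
Your proof is correct and follows exactly the route the paper intends: the paper proves Proposition \ref{rel_eq_nueva_conec} merely by remarking that one argues as in Proposition \ref{pro1}, and your write-up carries out that adaptation in full detail. In particular, you correctly handle the two points where care is needed, namely the index shift coming from the fact that in Definition \ref{def nueva conec} the starting root appears as $\zeta_1$ (so that the case $\beta \notin \{\eta,-\eta\}$ forces $m \geq 2$ rather than $m \geq 1$), and the use of the symmetry of both $\Lambda$ and $\mathcal{P}$ to keep the negated partial sums inside $\Lambda \cup \mathcal{P}$.
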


Given $\alpha \in \Lambda$, we denote by
$$\Lambda_{\alpha}:=\{\beta \in \Lambda : \beta
\approx \alpha\}.$$ We also have that if $\beta \in
\Lambda_{\alpha}$ then $-\beta \in \Lambda_{\alpha}$ and, by
Proposition \ref{rel_eq_nueva_conec}, if $\mu \notin
\Lambda_{\alpha}$ then $\Lambda_{\alpha} \cap \Lambda_{\mu} =
\emptyset$.

Our next aim is to associate an adequate ideal of $L$ to any
$\Lambda_{\alpha}$. For $\Lambda_{\alpha}, \alpha \in \Lambda$, we
define
$$H_{\Lambda_{\alpha}}: = span_{\hu
K}\{[L_{\beta},L_{-\beta}]: \beta \in \Lambda_{\alpha}\} \subset
H,$$ and
$$N_{\Lambda_{\alpha}}: = \bigoplus\limits_{\beta
\in \Lambda_{\alpha}}L_{\beta}.$$ We denote by
$L_{\Lambda_{\alpha}}$ the following linear subspace of $L$,
$$L_{\Lambda_{\alpha}} :=
H_{\Lambda_{\alpha}} \oplus N_{\Lambda_{\alpha}}.$$

\begin{proposition}\label{pro1_nueva}
Let $\alpha \in \Lambda$. Then the following assertions hold.

\begin{enumerate}
\item[{\rm 1.}] $[L_{\Lambda_{\alpha}},L_{\Lambda_{\alpha}}]
\subset L_{\Lambda_{\alpha}}$.

\item[{\rm 2.}]  If $\mu \notin \Lambda_{\alpha}$ then
$[L_{\Lambda_{\alpha}}, L_{\Lambda_{\mu}}] = 0$.
\end{enumerate}
\end{proposition}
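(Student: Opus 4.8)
The plan is to expand every bracket according to the decomposition $L_{\Lambda_{\alpha}} = H_{\Lambda_{\alpha}} \oplus N_{\Lambda_{\alpha}}$ and to reduce the problem, in all cases, to brackets of the form $[L_{\beta}, L_{\delta}]$ between root spaces, where the relation $\approx$ of Definition \ref{def nueva conec} can be brought to bear. The single fact driving both assertions is that a \emph{nonzero} bracket $[L_{\beta}, L_{\delta}]$ of root spaces forces $\beta$ and $\delta$ to be connected, so that they lie in the same class.

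For assertion 1, I would write $[L_{\Lambda_{\alpha}}, L_{\Lambda_{\alpha}}]$ as the sum of $[H_{\Lambda_{\alpha}}, H_{\Lambda_{\alpha}}]$, $[H_{\Lambda_{\alpha}}, N_{\Lambda_{\alpha}}]$ and $[N_{\Lambda_{\alpha}}, N_{\Lambda_{\alpha}}]$. The first vanishes since $H$ is abelian, and the second lies in $N_{\Lambda_{\alpha}}$ because $[H, L_{\beta}] \subset L_{\beta}$ for $\beta \in \Lambda_{\alpha}$. For the third, take $\beta, \delta \in \Lambda_{\alpha}$ and use $[L_{\beta}, L_{\delta}] \subset L_{\beta+\delta}$. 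If $\beta + \delta = 0$ then $[L_{\beta}, L_{-\beta}] \subset H_{\Lambda_{\alpha}}$ directly by the definition of $H_{\Lambda_{\alpha}}$; if $\beta + \delta \neq 0$ and the bracket is nonzero, then $\beta + \delta \in \Lambda$ and $\{\beta, \delta\}$ is a connection from $\beta$ to $\beta + \delta$ (its only partial sum $\beta$ lies in $\Lambda$, and $\beta+\delta \in \Lambda$), so $\beta + \delta \in \Lambda_{\alpha}$ and $[L_{\beta}, L_{\delta}] \subset N_{\Lambda_{\alpha}}$. Hence the bracket stays inside $L_{\Lambda_{\alpha}}$.

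For assertion 2, the heart is to show $[N_{\Lambda_{\alpha}}, N_{\Lambda_{\mu}}] = 0$ when $\mu \notin \Lambda_{\alpha}$. Fix $\beta \in \Lambda_{\alpha}$ and $\delta \in \Lambda_{\mu}$. Since $-\beta \in \Lambda_{\alpha}$ and $\Lambda_{\alpha} \cap \Lambda_{\mu} = \emptyset$, we have $\delta \neq -\beta$, so $\beta + \delta \neq 0$. Suppose, for contradiction, $[L_{\beta}, L_{\delta}] \neq 0$; then $\beta + \delta \in \Lambda$, and using the symmetry of $\Lambda$ the triple $\{\beta, \delta, -\beta\}$ is a connection from $\beta$ to $\delta$: its partial sums are $\beta$ and $\beta + \delta$, both in $\Lambda$, while $\beta + \delta - \beta = \delta$. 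Thus $\beta \approx \delta$, contradicting $\Lambda_{\alpha} \cap \Lambda_{\mu} = \emptyset$. Therefore every such bracket vanishes.

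The remaining cross terms follow from this. The term $[H_{\Lambda_{\alpha}}, H_{\Lambda_{\mu}}]$ is zero as $H$ is abelian. For $[H_{\Lambda_{\alpha}}, N_{\Lambda_{\mu}}]$ I would apply the Jacobi identity to $[[L_{\beta}, L_{-\beta}], L_{\delta}]$, rewriting it through $[L_{\beta}, [L_{-\beta}, L_{\delta}]]$ and $[L_{-\beta}, [L_{\beta}, L_{\delta}]]$; since $\pm\beta \in \Lambda_{\alpha}$ and $\delta \in \Lambda_{\mu}$, the inner brackets $[L_{\pm\beta}, L_{\delta}]$ already vanish by the previous paragraph, so the whole expression is zero, and the symmetric term $[N_{\Lambda_{\alpha}}, H_{\Lambda_{\mu}}]$ vanishes likewise (noting that $\mu \notin \Lambda_{\alpha}$ forces $\alpha \notin \Lambda_{\mu}$ by symmetry of $\approx$). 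Collecting the four pieces gives $[L_{\Lambda_{\alpha}}, L_{\Lambda_{\mu}}] = 0$. I expect the main obstacle to be precisely the key step of the third paragraph: exhibiting the correct connection $\{\beta, \delta, -\beta\}$, which is exactly where the symmetry of $\Lambda$ is indispensable; everything else is bookkeeping on the root-space grading together with one Jacobi reduction.
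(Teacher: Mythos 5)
Your proposal is correct and follows essentially the same route as the paper: decompose into $H$/$N$ pieces, show $[N_{\Lambda_{\alpha}},N_{\Lambda_{\mu}}]=0$ via the connection $\{\beta,\delta,-\beta\}$, and kill the $[H_{\Lambda_{\alpha}},N_{\Lambda_{\mu}}]$ terms by one Jacobi reduction. The only cosmetic difference is in assertion 1, where you place $\beta+\delta$ in $\Lambda_{\alpha}$ via the two-step connection $\{\beta,\delta\}$ and transitivity of $\approx$, while the paper appends $\pm\delta$ to an explicit connection from $\alpha$ to $\beta$; both are equally valid.
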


\begin{proof}
1. Taking into account $H=L_0$ and $[L_{\alpha},L_{\beta}] \subset
L_{\alpha+\beta}$, we have
\begin{equation}\label{cero}
[L_{\Lambda_{\alpha}},L_{\Lambda_{\alpha}}]=[H_{\Lambda_{\alpha}}
\oplus N_{\Lambda_{\alpha}}, H_{\Lambda_{\alpha}} \oplus
N_{\Lambda_{\alpha}}] \subset N_{\Lambda_{\alpha}} +
\sum\limits_{\beta, \lambda \in
\Lambda_{\alpha}}[L_{\beta},L_{\lambda}].
\end{equation}
If $\lambda = -\beta$ then
\begin{equation}\label{uno}
[L_{\beta},L_{\lambda}] \subset H_{\Lambda_{\alpha}}.
\end{equation}
If $\lambda \neq -\beta$ and $[L_{\beta},L_{\lambda}] \neq 0$,
then $\beta + \lambda \in \Lambda$. From here, if
$\{\zeta_1,...,\zeta_n\}$ is a connection from $\alpha$ to $\beta$
then $\{\zeta_1,...,\zeta_n,\lambda\}$ is a connection from
$\alpha$ to $\beta + \lambda$ in case $\zeta_1+\cdots+\zeta_n =
\beta$ and $\{\zeta_1,.....,\zeta_n,-\lambda\}$ in case
$\zeta_1+\cdots+\zeta_n= -\beta$. Hence $\beta + \lambda \in
\Lambda_{\alpha}$ and so
\begin{equation}\label{dos}
[L_{\beta},L_{\lambda}]\subset N_{\Lambda_{\alpha}}.
\end{equation}
From equations (\ref{cero}), (\ref{uno}) and (\ref{dos}) we
conclude $[L_{\Lambda_{\alpha}},L_{\Lambda_{\alpha}}] \subset
L_{\Lambda_{\alpha}}$.

2.  We have
$$[L_{\Lambda_{\alpha}}, L_{\Lambda_{\mu}}] = [H_{\Lambda_{\alpha}}
\oplus N_{\Lambda_{\alpha}}, H_{\Lambda_{\mu}} \oplus
N_{\Lambda_{\mu}}] \subset$$
\begin{equation}\label{cuatro}
\subset [H_{\Lambda_{\alpha}}, N_{\Lambda_{\mu}}] +
[N_{\Lambda_{\alpha}}, H_{\Lambda_{\mu}}] + [N_{\Lambda_{\alpha}},
N_{\Lambda_{\mu}}].
\end{equation}
Consider in (\ref{cuatro}) the third summand
$[N_{\Lambda_{\alpha}}, N_{\Lambda_{\mu}}]$ and suppose there
exist $\beta \in \Lambda_{\alpha}$ and $\eta \in \Lambda_{\mu}$
such that $[L_{\beta}, L_{\eta}] \neq 0$. As necessarily $\beta
\neq -\eta$, then $\beta + \eta \in \Lambda$. So $\{\beta, \eta,
-\beta\}$ is a connection between $\beta$ and $\eta$. By the
transitivity of the connection relation we have $\mu \in
\Lambda_{\alpha}$, a
 contradiction. Hence $[L_{\beta},L_{\eta}] = 0$ and so
\begin{equation}\label{nueve}
[N_{\Lambda_{\alpha}},N_{\Lambda_{\mu}}]=0.
\end{equation}
Consider now the first summand $[H_{\Lambda_{\alpha}},
N_{\Lambda_{\mu}}]$ in equation (\ref{cuatro}) and suppose there
exist $\beta \in \Lambda_{\alpha}$ and $\eta \in \Lambda_{\mu}$
such that $[[L_{\beta},L_{-\beta}],L_{\eta}] \neq 0$. By applying
 Jacobi identity, either   $[[L_{-\beta},L_{\eta}],L_{\beta}]
\neq 0$ or $[[L_{\eta},L_{\beta}], L_{-\beta}] \neq 0$ and so
either $[L_{-\beta},L_{\eta}] \neq 0$ or $[L_{\eta},L_{\beta}]
\neq 0$, what contradicts equation (\ref{nueve}).  Hence $$[
H_{\Lambda_{\alpha}},N_{\Lambda_{\mu}}] = 0.$$ Finally, we note
that the same above argument shows
$$[N_{\Lambda_{\alpha}}, H_{\Lambda_{\mu}}]=0.$$
By equation (\ref{cuatro}) we conclude
$[L_{\Lambda_{\alpha}},L_{\Lambda_{\mu}}]=0$.
\end{proof}

By Proposition \ref{pro1_nueva}-1 we can assert that for any
$\alpha \in \Lambda$, $L_{\Lambda_{\alpha}}$ is a subalgebra of
$L$ that we call the subalgebra of $L$ {\it associated} to
$\Lambda_{\alpha}$.

\begin{theorem}\label{teo1_nueva}
The following assertions hold.

\begin{enumerate}
\item[{\rm 1.}]  For any $\alpha \in \Lambda$, the subalgebra
\[
L_{\Lambda_{\alpha}} = H_{\Lambda_{\alpha}} \oplus
N_{\Lambda_{\alpha}}
\]
of $L$ associated to $\Lambda_{\alpha}$ is an ideal of $L$.

\item[{\rm 2.}]  If $L$ is simple, then there exists a connection
from $\alpha$ to $\beta$ for any $\alpha,\beta \in \Lambda$ and $H
= \sum\limits_{\alpha \in \Lambda}[L_{\alpha}, L_{-\alpha}]$.
\end{enumerate}
\end{theorem}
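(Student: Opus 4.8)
The plan is to prove Theorem \ref{teo1_nueva} in two parts, mirroring the structure already established for weight modules in Theorem \ref{teo1}, but now working in the algebra $L$ itself and using the root-connection relation $\approx$ from Definition \ref{def nueva conec}. For part 1, I would show that $L_{\Lambda_\alpha}$ is not merely a subalgebra (which we already know from Proposition \ref{pro1_nueva}-1) but actually an ideal, by computing $[L, L_{\Lambda_\alpha}]$ and verifying it lands back in $L_{\Lambda_\alpha}$. The natural way to organize this is to decompose an arbitrary element of $L$ as $h + \sum_i e_{\alpha_i}$ with $h \in H$ and $e_{\alpha_i} \in L_{\alpha_i}$, $\alpha_i \in \Lambda$, and to bracket it against the generators $H_{\Lambda_\alpha} \oplus N_{\Lambda_\alpha}$ of $L_{\Lambda_\alpha}$.

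First I would dispose of the action of $H = L_0$: since $[H, L_\beta] \subset L_\beta$ for $\beta \in \Lambda_\alpha$ and $[H, H_{\Lambda_\alpha}] = 0$, the Cartan part contributes nothing outside $L_{\Lambda_\alpha}$. The main work is then the bracket $[L_{\alpha_i}, L_{\Lambda_\alpha}]$ for a root $\alpha_i \in \Lambda$ with $\alpha_i \notin \Lambda_\alpha$ (the case $\alpha_i \in \Lambda_\alpha$ being covered by Proposition \ref{pro1_nueva}-1). Here I would split into two subcases exactly as in the proofs above. For $[L_{\alpha_i}, N_{\Lambda_\alpha}]$: if $[L_{\alpha_i}, L_\beta] \neq 0$ with $\beta \in \Lambda_\alpha$ and $\alpha_i \neq -\beta$, then $\alpha_i + \beta \in \Lambda$, and appending $\alpha_i$ to a connection from $\alpha$ to $\beta$ (this is precisely the argument yielding equation (\ref{dos})) shows $\alpha_i + \beta \in \Lambda_\alpha$, so the product stays inside $N_{\Lambda_\alpha}$; if $\alpha_i = -\beta$ then $[L_{\alpha_i}, L_\beta] \subset H$, and I would need to check it lands in $H_{\Lambda_\alpha} = span\{[L_\beta, L_{-\beta}] : \beta \in \Lambda_\alpha\}$, which it does since $\beta \in \Lambda_\alpha$. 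For $[L_{\alpha_i}, H_{\Lambda_\alpha}]$, a Jacobi-identity argument identical to the one producing the vanishing in equation (\ref{nueve}) of Proposition \ref{pro1_nueva}-2 shows this term vanishes whenever $\alpha_i \notin \Lambda_\alpha$. Collecting these facts gives $[L, L_{\Lambda_\alpha}] \subset L_{\Lambda_\alpha}$, so $L_{\Lambda_\alpha}$ is an ideal.

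For part 2, I would use simplicity directly. Since $L_{\Lambda_\alpha}$ is a nonzero ideal (it contains $L_\alpha \neq 0$) of the simple algebra $L$, we must have $L_{\Lambda_\alpha} = L$. This forces $\Lambda_\alpha = \Lambda$ (so every pair of nonzero roots is connected, giving the first assertion) and also $H_{\Lambda_\alpha} = H$, which reads $H = \sum_{\alpha \in \Lambda}[L_\alpha, L_{-\alpha}]$, the second assertion. I expect the main obstacle to be bookkeeping in the Jacobi-identity step for $[L_{\alpha_i}, H_{\Lambda_\alpha}]$: one must be careful that \emph{both} resulting brackets $[L_{-\beta}, L_{\alpha_i}]$ and $[L_{\alpha_i}, L_\beta]$ are controlled by the already-established vanishing of $[N_{\Lambda_\alpha}, N_{\Lambda_\mu}]$-type products, and that the root $\alpha_i$ genuinely lies outside $\Lambda_\alpha$ so that no connection can be manufactured. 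A subtle point worth flagging is that simplicity of $L$ presupposes $L$ is perfect with $\mathcal Z(L)=0$, so one should confirm that the ideal $L_{\Lambda_\alpha}$ being all of $L$ is compatible with the non-triviality needed to conclude $\Lambda_\alpha = \Lambda$ rather than the degenerate alternative.
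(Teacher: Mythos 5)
Your proposal is correct and takes essentially the same route as the paper: the paper's proof of part 1 simply decomposes $L = H \oplus (\bigoplus_{\beta \in \Lambda_{\alpha}} L_{\beta}) \oplus (\bigoplus_{\mu \notin \Lambda_{\alpha}} L_{\mu})$ and invokes Proposition \ref{pro1_nueva} (part 1 for roots in $\Lambda_{\alpha}$, part 2 for roots outside, plus the trivial Cartan action), which is exactly the case analysis you perform, except that you re-derive the appending-of-connections and Jacobi arguments inside Proposition \ref{pro1_nueva}-2 rather than citing its statement outright. Your part 2 coincides with the paper's: the nonzero ideal $L_{\Lambda_{\alpha}}$ must equal the simple algebra $L$, forcing $\Lambda_{\alpha} = \Lambda$ and $H = \sum_{\alpha \in \Lambda}[L_{\alpha}, L_{-\alpha}]$.
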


\begin{proof}
1.  Since $[L_{\Lambda_{\alpha}},H] \subset N_{\Lambda_{\alpha}}$,
taking into account Proposition \ref{pro1_nueva}, we have
$$
\lbrack L_{\Lambda_{\alpha}},L]=[L_{\Lambda_{\alpha}}, H
\oplus(\bigoplus \limits_{\beta \in
\Lambda_{\alpha}}L_{\beta})\oplus (\bigoplus \limits_{\mu \notin
\Lambda_{\alpha}}L_{\mu})]\subset L_{\Lambda_{\alpha}}.
$$

2. The simplicity of $L$ implies $L_{\Lambda_{\alpha}}=L$.
Therefore $\Lambda_{\alpha} = \Lambda$ and $H =
\sum\limits_{\alpha \in \Lambda}[L_{\alpha}, L_{-\alpha}]$.
\end{proof}

\begin{proposition}\label{teo2_nueva}  For a linear  complement $\mathcal{U}$ of $span_{\hu
K}\{[L_{\alpha},L_{-\alpha}]:\alpha\in \Lambda\}$ in $H$, we have
$$L = \mathcal{U}+ \sum\limits_{[\alpha] \in
\Lambda/\approx} I_{[\alpha]},$$ where any $I_{[\alpha]}$ is one
of the ideals $L_{\Lambda_{\alpha}}$ of $L$ described in Theorem
\ref{teo1_nueva}-1, satisfying $[I_{[\alpha]},I_{[\beta]}]=0$ if
$[\alpha] \neq [\beta].$
\end{proposition}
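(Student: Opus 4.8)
The plan is to mirror the argument already given for Proposition~\ref{teo2}, exploiting that all the essential work has been carried out in Theorem~\ref{teo1_nueva} and Proposition~\ref{pro1_nueva}. First I would invoke Proposition~\ref{rel_eq_nueva_conec} to form the quotient set $\Lambda/\approx = \{[\alpha] : \alpha \in \Lambda\}$ and define, for each class, $I_{[\alpha]} := L_{\Lambda_{\alpha}}$. The point to verify here is well-definedness, i.e. independence of the chosen representative: if $\alpha \approx \beta$ then the classes coincide, $\Lambda_{\alpha} = \Lambda_{\beta}$, whence $H_{\Lambda_{\alpha}} = H_{\Lambda_{\beta}}$ and $N_{\Lambda_{\alpha}} = N_{\Lambda_{\beta}}$, so $L_{\Lambda_{\alpha}} = L_{\Lambda_{\beta}}$. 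By Theorem~\ref{teo1_nueva}-1 each such $I_{[\alpha]}$ is an ideal of $L$.

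Next I would establish the spanning statement by treating the Cartan part and the root part separately. Since $\Lambda$ is the disjoint union of its $\approx$-classes, the root-space summand splits as $\bigoplus_{\alpha \in \Lambda} L_{\alpha} = \sum_{[\alpha] \in \Lambda/\approx} N_{\Lambda_{\alpha}}$. For the Cartan part, note that every generator $[L_{\beta},L_{-\beta}]$ with $\beta \in \Lambda$ lies in $H_{\Lambda_{\beta}}$, so $span_{\hu K}\{[L_{\alpha},L_{-\alpha}] : \alpha \in \Lambda\} = \sum_{[\alpha] \in \Lambda/\approx} H_{\Lambda_{\alpha}}$, and adding the fixed complement $\mathcal{U}$ of this span in $H$ recovers all of $H$. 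Combining both observations with the root-space decomposition $L = H \oplus (\bigoplus_{\alpha \in \Lambda} L_{\alpha})$ and regrouping the Cartan and root contributions of each class yields
\[
L = \mathcal{U} + \sum_{[\alpha] \in \Lambda/\approx}\bigl(H_{\Lambda_{\alpha}} \oplus N_{\Lambda_{\alpha}}\bigr) = \mathcal{U} + \sum_{[\alpha] \in \Lambda/\approx} I_{[\alpha]},
\]
as required.

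Finally, the orthogonality relation is immediate from Proposition~\ref{pro1_nueva}-2: if $[\alpha] \neq [\beta]$ then $\beta \notin \Lambda_{\alpha}$, and applying that proposition with $\mu = \beta$ gives $[L_{\Lambda_{\alpha}}, L_{\Lambda_{\beta}}] = 0$, that is $[I_{[\alpha]}, I_{[\beta]}] = 0$.

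I do not anticipate any serious obstacle, since the delicate connection-and-ideal bookkeeping has been absorbed into the preceding results; the only points demanding a little care are the well-definedness of $I_{[\alpha]}$ and the verification that the span of all $[L_{\alpha}, L_{-\alpha}]$ is exactly the class-wise sum of the $H_{\Lambda_{\alpha}}$. I would also flag that the statement asserts only a (not necessarily direct) sum: the directness is a separate matter that presumably requires the perfectness hypothesis and is handled in the subsequent result, so I would not attempt to prove it here.
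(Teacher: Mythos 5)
Your proof is correct and follows essentially the same route as the paper: the paper simply says the decomposition of $L$ ``can be obtained as the one of $V$ in Proposition~\ref{teo2}'' and then invokes Proposition~\ref{pro1_nueva}-2 for the relation $[I_{[\alpha]},I_{[\beta]}]=0$, which is exactly what you do, except that you spell out the well-definedness of $I_{[\alpha]}$ and the class-wise regrouping of $H$ and the root spaces that the paper leaves implicit. Your closing remark about directness being deferred (to Theorem~\ref{coro1}, under ${\mathcal Z}(L)=0$ and $H=\sum_{\alpha\in\Lambda}[L_{\alpha},L_{-\alpha}]$) is also accurate.
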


\begin{proof}
The decomposition of $L$ can be obtained as the one of $V$ in
Proposition \ref{teo2}. Finally, by  applying Proposition
\ref{pro1_nueva}-2 we also obtain $[I_{[\alpha]},I_{[\beta]}]=0$
if $[\alpha] \neq [\beta].$
\end{proof}

\begin{theorem}\label{coro1}
If ${\mathcal Z}(L)=0$ and $H = \sum\limits_{\alpha \in
\Lambda}[L_{\alpha}, L_{-\alpha}]$, then $L$ is the direct sum of
the ideals given in Theorem \ref{teo1_nueva},
$$L =\bigoplus\limits_{[\alpha] \in \Lambda/\approx}
I_{[\alpha]}.$$
\end{theorem}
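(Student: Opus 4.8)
The plan is to mirror the proof of Theorem~\ref{corolario}, with the weight-connection data replaced by its root-connection analogue. First I would invoke Proposition~\ref{teo2_nueva}, which gives $L = \mathcal{U} + \sum_{[\alpha] \in \Lambda/\approx} I_{[\alpha]}$ for a linear complement $\mathcal{U}$ of $span_{\hu K}\{[L_\alpha,L_{-\alpha}] : \alpha \in \Lambda\}$ in $H$, together with $[I_{[\alpha]},I_{[\beta]}]=0$ for $[\alpha]\neq[\beta]$. The hypothesis $H = \sum_{\alpha\in\Lambda}[L_\alpha,L_{-\alpha}]$ forces $\mathcal{U}=0$, so already $L = \sum_{[\alpha]} I_{[\alpha]}$; it then remains only to upgrade this to a direct sum.

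Next I would reduce the directness to the toral part. Writing $I_{[\alpha]} = H_{\Lambda_{[\alpha]}}\oplus N_{\Lambda_{[\alpha]}}$ and using the fixed decomposition $L = H\oplus(\bigoplus_{\mu\in\Lambda}L_\mu)$, the summands $N_{\Lambda_{[\alpha]}}$ of distinct classes involve pairwise disjoint sets of roots, since the classes partition $\Lambda$; hence $\sum_{[\alpha]} N_{\Lambda_{[\alpha]}} = \bigoplus_{\mu\in\Lambda}L_\mu$ is automatically direct and is a complement of $H$. Matching the $H$- and root-components of any relation $\sum_{[\alpha]} z_{[\alpha]} = 0$ with $z_{[\alpha]}\in I_{[\alpha]}$, the whole sum is direct as soon as $\sum_{[\alpha]} H_{\Lambda_{[\alpha]}}$ is direct inside $H$.

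The crux, and the step I expect to be the main obstacle, is the root-theoretic analogue of Lemma~\ref{pag5}-2: if $0\neq h\in H_{\Lambda_{[\alpha]}}$ and $\gamma\in\Lambda$ satisfies $[h,L_\gamma]\neq 0$, then $\gamma\in\Lambda_{[\alpha]}$. To prove it I would write $h = \sum_i [x_{\beta_i},y_{-\beta_i}]$ with $\beta_i\in\Lambda_{[\alpha]}$, $x_{\beta_i}\in L_{\beta_i}$, $y_{-\beta_i}\in L_{-\beta_i}$; since $h\in H=L_0$ the bracket $[h,e_\gamma]$ is a scalar multiple of a nonzero $e_\gamma\in L_\gamma$, nonzero exactly when $\gamma(h)\neq 0$, so some term satisfies $[[x_\beta,y_{-\beta}],e_\gamma]\neq 0$ with $\beta:=\beta_i\in\Lambda_{[\alpha]}$. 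Expanding this by the Jacobi identity as in the proof of Proposition~\ref{pro1_nueva}-2 gives either $[x_\beta,e_\gamma]\neq 0$ or $[y_{-\beta},e_\gamma]\neq 0$, so that $\beta+\gamma$ or $-\beta+\gamma$ lies in $\Lambda\cup\{0\}$. If it equals $0$ then $\gamma\in\{-\beta,\beta\}\subset\Lambda_{[\alpha]}$ by symmetry of $\Lambda_{[\alpha]}$; otherwise $\{-\beta,\beta+\gamma\}$ (respectively $\{\beta,-\beta+\gamma\}$) is a connection from $-\beta\in\Lambda_{[\alpha]}$ (respectively $\beta\in\Lambda_{[\alpha]}$) to $\gamma$, whence $\gamma\approx\alpha$ by Proposition~\ref{rel_eq_nueva_conec}.

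Finally I would close the directness argument as in Theorem~\ref{corolario}. Suppose $0\neq h\in H_{\Lambda_{[\alpha]}}\cap(\sum_{[\beta]\neq[\alpha]} H_{\Lambda_{[\beta]}})$. Since ${\mathcal Z}(L)=0$ and $[H,H]=0$, there is $\gamma\in\Lambda$ with $[h,L_\gamma]\neq 0$. Applying the claim to $h\in H_{\Lambda_{[\alpha]}}$ yields $\gamma\in\Lambda_{[\alpha]}$, while writing $h=\sum_j h_j$ with $h_j\in H_{\Lambda_{[\beta_j]}}$, $[\beta_j]\neq[\alpha]$, and applying the claim to a summand with $\gamma(h_j)\neq 0$ yields $\gamma\in\Lambda_{[\beta_j]}$. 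As distinct classes are disjoint this is a contradiction, so $h=0$; thus $\sum_{[\alpha]} H_{\Lambda_{[\alpha]}}$ is direct and therefore $L=\bigoplus_{[\alpha]\in\Lambda/\approx} I_{[\alpha]}$.
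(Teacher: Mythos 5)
Your proof is correct, but it reaches the directness of the sum by a genuinely different route than the paper. The opening coincides: both invoke Proposition \ref{teo2_nueva} to get $L=\mathcal{U}+\sum_{[\alpha]\in\Lambda/\approx}I_{[\alpha]}$ and use the hypothesis $H=\sum_{\alpha\in\Lambda}[L_{\alpha},L_{-\alpha}]$ to force $\mathcal{U}=0$. From there the paper never descends to root components: it deduces directness from the orthogonality $[I_{[\alpha]},I_{[\beta]}]=0$ together with $\mathcal{Z}(L)=0$. Indeed, if $x\in I_{[\alpha]}\cap\sum_{[\beta]\neq[\alpha]}I_{[\beta]}$, then $[x,I_{[\beta]}]=0$ for every $[\beta]\neq[\alpha]$ because $x\in I_{[\alpha]}$, and $[x,I_{[\alpha]}]=0$ because $x$ lies in the sum of the remaining ideals; since these ideals span all of $L$, it follows that $x\in\mathcal{Z}(L)=0$. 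You, by contrast, make no use of the orthogonality in the directness step: you reduce it to the toral pieces $H_{\Lambda_{\alpha}}$ by matching components in $L=H\oplus(\bigoplus_{\mu\in\Lambda}L_{\mu})$, prove a root analogue of Lemma \ref{pag5}-2 (if $0\neq h\in H_{\Lambda_{\alpha}}$ and $[h,L_{\gamma}]\neq 0$ then $\gamma\in\Lambda_{\alpha}$), and then transplant the proof of Theorem \ref{corolario}. Every step of yours checks out: the Jacobi expansion, the two-step connections $\{-\beta,\beta+\gamma\}$ and $\{\beta,-\beta+\gamma\}$, and the evaluation argument ($\gamma(h)\neq 0$ forces $\gamma(h_j)\neq 0$ for some summand $h_j$) are all valid, given that the classes $\Lambda_{\alpha}$ are symmetric and pairwise disjoint. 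What your route buys is uniformity with the module case --- Theorem \ref{corolario} cannot use the paper's trick, since the submodules $V_{[\gamma]}$ do not act on one another, so your argument treats $L$ exactly as the paper treats $V$ --- together with an auxiliary lemma of some independent interest. What it costs is brevity: mutually annihilating ideals that span an algebra with trivial center automatically sum directly, which is the paper's one-line conclusion.
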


\begin{proof}
By Proposition \ref{teo2_nueva}, $L = \mathcal{U}+
\sum\limits_{[\alpha] \in \Lambda/\approx} I_{[\alpha]}$. From $H
= \sum\limits_{\alpha \in \Lambda}[L_{\alpha}, L_{-\alpha}]$ it is
clear that $\mathcal{U} = 0$. Finally, the direct character of the
sum now follows from the facts $[I_{[\alpha]},I_{[\beta]}]=0$, if
$[\alpha] \neq [\beta]$, and ${\mathcal Z}(L)=0$.
\end{proof}

\medskip

 Let us also concrete this result in Example \ref{example}.
We have that in case $\{e \in Sym({\frak L},\xi):[e, Sym({\frak
L},\xi)]\}=0$ and $$Sym(H,\xi)= \sum\limits_{{\alpha} \in
\Lambda}[\Pi_0({\frak L}_{\alpha}), \Pi_0({\frak L}_{-\alpha})],$$
we can decompose  $Sym({\frak L},\xi)$ by means of an adequate
family of  ideals as
$$Sym({\frak L},\xi) =\bigoplus\limits_{[\alpha] \in
\Lambda|_{Sym(H,\xi)/\approx}}
Sym({\frak L},\xi)_{[\alpha]}.$$

\section{Relating the decompositions of $V$ and $L$}

Suppose that  $V$  satisfies $LV=V$ and ${\mathcal Z}(V)=0$, and
that $L$  is perfect. Since the fact $[L,L]=L$ implies $H =
\sum\limits_{\alpha \in \Lambda}[L_{\alpha}, L_{-\alpha}]$,
Theorems \ref{corolario} and \ref{coro1} give us
$$\hbox{$V = \bigoplus\limits_{[\gamma] \in \mathcal{P}/\sim}
V_{[\gamma]}$ and $L =\bigoplus\limits_{[\alpha] \in
\Lambda/\approx} I_{[\alpha]}$}$$ with any $V_{[\gamma]}$ a weight
submodule of $V$ and any $I_{[\alpha]}$ an ideal of $L$ satisfying
$[I_{[\alpha]},I_{[\beta]}]=0$ if $[\alpha] \neq [\beta]$.

For any $[\gamma] \in \mathcal{P}/\sim$, since ${\mathcal
Z}(V)=0$, there exists $[\alpha] \in \Lambda/\approx$ such that
$I_{[\alpha]}V_{[\gamma]} \neq 0$.
 Let us show that this
$[\alpha]$ is unique. To do that, consider $\beta \in \Lambda$
satisfying  $I_{[\beta]}V_{[\gamma]} \neq 0$. From the facts
$I_{[\alpha]}V_{[\gamma]} \neq 0$ and $I_{[\beta]}V_{[\gamma]}
\neq 0$ we can take $\alpha' \in {[\alpha]}$, $\beta' \in
{[\beta]}$ and $\gamma',\gamma'' \in {[\gamma]}$ such that
$L_{\alpha'}V_{\gamma'} \neq 0$ and $L_{\beta'}V_{\gamma''} \neq
0$. Since $\gamma',\gamma'' \in {[\gamma]}$, we can fix a
connection $$\{\gamma', \alpha_1,...,\alpha_n\},$$
($\alpha_1,...,\alpha_n \in \Lambda$),  from the weight $\gamma'$
to the weight $\gamma''$.

Let us distinguish four cases. First, if $\alpha'+\gamma' \neq 0$,
$\beta'+\gamma'' \neq 0$ and so $\alpha'+\gamma'$,
$\beta'+\gamma'' \in \mathcal{P}$, we have that
$$\{\alpha',\gamma', -\alpha',\alpha_1,...,\alpha_n, \beta',
-\gamma''\}\subset \Lambda \cup \mathcal{P}$$ is a connection from
the root $\alpha'$ to the root $\beta'$ in the case $\gamma'+
\alpha_1+ \cdots +\alpha_n= \gamma''$ while $\{\alpha',\gamma',
-\alpha',\alpha_1,...,\alpha_n, -\beta', \gamma''\}$ gives us the
same connection of roots in the case $\gamma'+ \alpha_1+ \cdots
+\alpha_n= -\gamma''$. From here $\alpha' \approx \beta'$ and so
$[\alpha]=[\beta]$. Second, if $\alpha'+\gamma' = 0$,
$\beta'+\gamma'' \neq 0$ and so $\alpha'=-\gamma'$,
$\beta'+\gamma'' \in \mathcal{P}$, we have that
$$\{-\gamma',-\alpha_1,...,-\alpha_n,-\beta', \gamma''\}$$ is a
connection of roots between $\alpha'$ and $\beta'$ in the case
$\gamma'+ \alpha_1+ \cdots +\alpha_n= \gamma''$ while
$\{-\gamma',-\alpha_1,...,-\alpha_n,\beta', -\gamma''\}$ it is in
the case $\gamma'+ \alpha_1+ \cdots +\alpha_n= -\gamma''$. From
here,  $[\alpha]=[\beta]$. Third, if $\alpha'+\gamma'\neq  0$,
$\beta'+\gamma'' = 0$ we can argue as in the previous case to get
$[\alpha]=[\beta]$. Finally, in the fourth case we suppose
$\alpha'+\gamma'=  0$, $\beta'+\gamma'' = 0$ and so
$\alpha'=-\gamma'$, $\beta'=-\gamma''.$ Then
$$\{-\gamma',-\alpha_1,...,-\alpha_n\}$$ is a connection of roots
between $\alpha'$ and $\beta'$ which  implies $[\alpha]=[\beta]$.
We conclude that the element $[\alpha] \in \Lambda/\approx$ such
that $I_{[\alpha]}V_{[\gamma]} \neq 0$ is unique. Also, let us
observe that Lemma \ref{pag5} gives us that $V_{[\gamma]}$ is a
weight module over $I_{[\alpha]}$. Hence we can assert.

\begin{theorem}\label{lema_final}
Let $V$ be weight  module respect to  a perfect split Lie algebra
$L$ such that $LV=V$ and ${\mathcal Z}(V)=0$. Then
$$\hbox{$L =\bigoplus\limits_{i\in I} I_{i}$ and $V = \bigoplus\limits_{j \in J} V_{j}$}$$
 with any $I_{i}$ a nonzero ideal of $L$
satisfying $[I_{i},I_{k}]=0$ if $i \neq k$, and any $V_{j}$ a
nonzero weight submodule of $V$ in such a way that for any $j \in
J$ there exists a unique $i \in I$ such that $$I_iV_j \neq 0.$$
Furthermore $V_j$ is a weight module over  $I_i$.
\end{theorem}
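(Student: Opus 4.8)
The plan is to read off the two direct-sum decompositions from the structure theorems already in hand, and then to concentrate the work on the existence and uniqueness of the ideal acting on each submodule. First I would observe that perfectness of $L$ supplies exactly the two hypotheses needed downstream: $\mathcal{Z}(L)=0$ holds by definition, while $[L,L]=L$ forces $H=\sum_{\alpha\in\Lambda}[L_\alpha,L_{-\alpha}]$, since the $H$-component of $[L,L]$ is precisely $\sum_{\alpha}[L_\alpha,L_{-\alpha}]$. With this, Theorem \ref{corolario} (using $LV=V$ and $\mathcal{Z}(V)=0$) gives $V=\bigoplus_{[\gamma]\in\mathcal{P}/\sim}V_{[\gamma]}$, Theorem \ref{coro1} gives $L=\bigoplus_{[\alpha]\in\Lambda/\approx}I_{[\alpha]}$, and Proposition \ref{teo2_nueva} (via Proposition \ref{pro1_nueva}-2) gives $[I_{[\alpha]},I_{[\beta]}]=0$ for $[\alpha]\neq[\beta]$. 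Relabelling $\Lambda/\approx$ and $\mathcal{P}/\sim$ as $I$ and $J$, the two decompositions and the commuting-ideals assertion are already proved; it remains to pin down the action.

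For existence, fix $[\gamma]\in J$ and a nonzero $v\in V_{[\gamma]}$. Since $\mathcal{Z}(V)=0$ we have $Lv\neq 0$, so $ev\neq 0$ for some $e\in L$; writing $e=\sum_{[\alpha]\in I}e_{[\alpha]}$ with $e_{[\alpha]}\in I_{[\alpha]}$ (the complement $\mathcal{U}$ is zero here because $H=\sum[L_\alpha,L_{-\alpha}]$), some summand satisfies $e_{[\alpha]}v\neq 0$, whence $I_{[\alpha]}V_{[\gamma]}\neq 0$.

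The crux, and what I expect to be the main obstacle, is uniqueness, because it is precisely here that the two connection relations --- of weights and of roots --- must be made to interact. Suppose $I_{[\alpha]}V_{[\gamma]}\neq 0$ and $I_{[\beta]}V_{[\gamma]}\neq 0$. I would extract witnesses $\alpha'\in[\alpha]$, $\beta'\in[\beta]$, $\gamma',\gamma''\in[\gamma]$ with $L_{\alpha'}V_{\gamma'}\neq 0$ and $L_{\beta'}V_{\gamma''}\neq 0$, and fix a weight connection $\{\gamma',\alpha_1,\dots,\alpha_n\}$ from $\gamma'$ to $\gamma''$ coming from $\gamma'\sim\gamma''$. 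The idea is to splice a root connection (in the sense of Definition \ref{def nueva conec}) from $\alpha'$ to $\beta'$: ascend from $\alpha'$ to $\gamma'$ through the detour $\alpha',\gamma',-\alpha'$, traverse $\alpha_1,\dots,\alpha_n$ to reach $\pm\gamma''$, and descend to $\beta'$ through $\beta',-\gamma''$ (with signs adjusted according to whether the weight connection lands on $\gamma''$ or $-\gamma''$). Here $L_{\alpha'}V_{\gamma'}\neq 0$ forces $\alpha'+\gamma'\in\mathcal{P}$ and $L_{\beta'}V_{\gamma''}\neq 0$ forces $\beta'+\gamma''\in\mathcal{P}$, so all intermediate partial sums lie in $\Lambda\cup\mathcal{P}$ and the chain is a legitimate root connection; hence $\alpha'\approx\beta'$ and $[\alpha]=[\beta]$.

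The genuine subtlety is that the two detours route through $\alpha'+\gamma'$ and $\beta'+\gamma''$, which belong to $\mathcal{P}$ only when they are nonzero. The degenerate cases $\alpha'=-\gamma'$ and $\beta'=-\gamma''$ must therefore be treated separately, reading the weight connection from $-\gamma'$ (respectively towards $\beta'=-\gamma''$) so as to avoid the forbidden sum $0\notin\Lambda\cup\mathcal{P}$; combining the two independent dichotomies produces the four cases one has to check. Once uniqueness of $[\alpha]$ is established, $V_{[\gamma]}$ --- already an $L$-submodule by Theorem \ref{teo1}-1 --- is in particular a module over the subalgebra $I_{[\alpha]}$, and Lemma \ref{pag5} ensures this action preserves the weight-space decomposition of $V_{[\gamma]}$; thus $V_{[\gamma]}$ is a weight module over $I_{[\alpha]}$, completing the proof.
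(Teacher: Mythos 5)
Your proposal is correct and follows essentially the same route as the paper's own proof: perfectness feeds Theorems \ref{corolario} and \ref{coro1} to produce the two decompositions, $\mathcal{Z}(V)=0$ yields existence of an acting ideal, uniqueness is obtained by splicing the weight connection $\{\gamma',\alpha_1,\dots,\alpha_n\}$ into a root connection from $\alpha'$ to $\beta'$ via the detours $\{\alpha',\gamma',-\alpha'\}$ and $\{\pm\beta',\mp\gamma''\}$, and Lemma \ref{pag5} gives the weight-module structure of $V_{[\gamma]}$ over $I_{[\alpha]}$. The four-case analysis you anticipate for the degenerate situations $\alpha'=-\gamma'$ and $\beta'=-\gamma''$ is exactly the case split carried out in the paper, handled there by negating the connection as you describe.
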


\medskip

 Finally, let us  concrete this result in Example \ref{example} as
 always. Observe that in this case ${\mathcal P}= \Lambda$ and so
 the equivalence relations $\sim$ and $\approx$ in Definitions
 \ref{con} and \ref{def nueva conec} agree. From  here, we have
 that  in case $Sym({\frak L},\xi)$ is perfect, $[Sym({\frak L},\xi),Skw({\frak L},\xi)]=Skw({\frak
L},\xi)$ and $\{v \in Skw({\frak L},\xi):[Sym({\frak
L},\xi),v]\}=0$, we can  decompose  $Sym({\frak L},\xi)$ and
$Skw({\frak L},\xi)$ by means of  adequate  families of ideals and
weight submodules respectively

$$Sym({\frak L},\xi) =\bigoplus\limits_{[\alpha] \in \Lambda|_{Sym(H,\xi)/\sim}}
Sym({\frak L},\xi)_{[\alpha]}$$ and $$Skw({\frak L},\xi)=
\bigoplus\limits_{[\alpha] \in \Lambda|_{Sym(H,\xi)/\sim}}
Skw({\frak L},\xi)_{[\alpha]}$$ in such a way that for any
$Skw({\frak L},\xi)_{[\alpha]}$, precisely $Sym({\frak
L},\xi)_{[\alpha]}$ is the unique  ideal in the decomposition of
$Sym({\frak L},\xi)$ such that $[Sym({\frak
L},\xi)_{[\alpha]},Skw({\frak L},\xi)_{[\alpha]}]\neq 0$, being
$Skw({\frak L},\xi)_{[\alpha]}$ a weight module over $Sym({\frak
L},\xi)_{[\alpha]}$.

\section{The simple components}

In this section we are showing that, under certain conditions, the
decomposition of $V$ given in Theorem \ref{lema_final} can be
given by means of  the family of its minimal submodules, each one
being a simple (weight) submodule.

\begin{lemma}\label{lema2}
Suppose ${\mathcal Z}(V)=0$. If $W$ is a  submodule of $V$ such
that $W \subset V_0$, then $W=\{0\}$.
\end{lemma}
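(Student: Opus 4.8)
The plan is to argue by contradiction, playing the two hypotheses $\mathcal{Z}(V)=0$ and $W\subset V_0$ against each other. First I would suppose $W\neq\{0\}$ and fix a nonzero element $w\in W$. Since $w\in W\subset V_0$, the definition of the zero weight space gives $hw=0$ for every $h\in H$, i.e.\ $Hw=0$; this is the only place where the containment $W\subset V_0$ is used, and it is the crux of the whole argument.

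Next I would use non-centrality to move $w$. Because $w\neq 0$ and $\mathcal{Z}(V)=\{v\in V: Lv=0\}$ is trivial, we have $w\notin\mathcal{Z}(V)$, so there exists $e\in L$ with $ew\neq 0$. Writing $e=h+\sum_{i=1}^{n}e_{\alpha_i}$ with $h\in H$, $e_{\alpha_i}\in L_{\alpha_i}$ and $\alpha_i\in\Lambda$, the relation $Hw=0$ forces $ew=\sum_{i=1}^{n}e_{\alpha_i}w$. Hence at least one summand is nonzero, so I obtain a nonzero root $\alpha\in\Lambda$ and a root vector $e_\alpha\in L_\alpha$ with $e_\alpha w\neq 0$.

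The concluding step is then immediate. Since $w\in V_0$ and $L_\alpha V_\gamma\subset V_{\alpha+\gamma}$, we get $0\neq e_\alpha w\in V_\alpha$, a genuinely nonzero weight space as $\alpha\neq 0$. On the other hand $W$ is a submodule, so $e_\alpha w\in LW\subset W\subset V_0$. Thus $e_\alpha w$ is a nonzero element of $V_0\cap V_\alpha$. But the weight decomposition $V=V_0\oplus(\bigoplus_{\gamma\in\mathcal{P}}V_\gamma)$ is direct and $\alpha\neq 0$, whence $V_0\cap V_\alpha=\{0\}$ --- a contradiction. Therefore $W=\{0\}$.

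I do not expect a genuine obstacle here: the content is simply that a nonzero element of $V_0$ cannot stay in $\mathcal{Z}(V)$-free $V_0$, since non-centrality must push it, via some nonzero root space, into a nonzero weight space disjoint from $V_0$, and submodule-invariance keeps it inside $W\subset V_0$. The single point deserving a line of care is the reduction $ew=\sum_i e_{\alpha_i}w$, i.e.\ verifying that the $H$-component of $e$ contributes nothing, which is exactly where the hypothesis $W\subset V_0$ enters.
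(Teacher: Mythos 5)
Your proof is correct and is essentially the paper's own argument in contrapositive form: both rest on the facts that $HW=0$, that $L_{\alpha}W\subset L_{\alpha}V_0$ lands in the nonzero-weight part of $V$, that $LW\subset W\subset V_0$ by submodule-invariance, and that the weight decomposition forces $V_0\cap V_{\alpha}=0$, with ${\mathcal Z}(V)=0$ supplying the final step. The paper phrases this directly ($LW\subset V_0\cap\bigoplus_{\beta}V_{\beta}=0$, hence $W\subset{\mathcal Z}(V)=0$) rather than by contradiction, but the content is identical.
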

\begin{proof}
 On the one hand  $LW \subset W \subset
V_0$, and on the other hand $$LW = (H \oplus (\bigoplus_{\alpha
\in \Lambda}{L_{\alpha}}))W \subset \bigoplus\limits_{\beta \in
\Lambda \cap \mathcal{P}}{V_{\beta}}.$$ Hence, $LW \subset V_0
\cap (\bigoplus\limits_{\beta \in \Lambda \cap
\mathcal{P}}{V_{\beta}})=0$. From here $W=\{0\}$.
\end{proof}

We recall the following definition (see \cite{Benkart, Britten}).

\begin{Definition}
A weight module $V$  it is  said {\rm completely pointed} if
$\dim{V}_{\gamma}=1$ for any $\gamma \in \mathcal{P}$.
\end{Definition}

Let us note that completely pointed weight modules  appears in a
natural way in several contexts.  See for instance \cite{Kp},
\cite{SZ} and \cite{Z} for the cases over Virasoro, generalized
Virasolo  and Witt algebras respectively.

Given any submodule $W$ of $V$, it is well known that  any
submodule of a weight module is again a weight module. So we have
$$W= (W \cap V_0) \oplus (\bigoplus\limits_{\gamma \in \mathcal{P}}
(W \cap V_{\gamma})).$$ Observe that if $V$ is completely pointed
then we can write
\begin{equation}\label{form_subm}
 \hbox{$W = (W \cap V_0) \oplus
(\bigoplus\limits_{\gamma \in \mathcal{P}^W} V_{\gamma})$  where
$\mathcal{P}^W := \{\gamma \in \mathcal{P} : W \cap V_{\gamma}
\neq 0\}$}.
\end{equation}

Let us also introduce the concept of weight-multiplicativity  in
the framework of weight modules over spit Lie algebras, in a
similar way to the analogous one for split Lie algebras, split Lie
triple systems and split twisted inner derivation triple systems,
(see  \cite{Yoalg, Yotriple1, Twisted} for these notions and
examples).
\begin{Definition}
We say that a weight module $V$ over a split Lie algebra $L$ is
{\rm weight-multiplicative} if given $\alpha \in \Lambda$ and
$\gamma \in \mathcal{P}$ such that $\alpha+\gamma \in
\mathcal{P}$, then $L_{\alpha}V_{\gamma} \neq 0.$
\end{Definition}

Here we note that if $V$ satisfies $V_0 = \sum\limits_{\beta \in
\Lambda \cap \mathcal{P}}{L_{-\beta}V_{\beta}}$ we will understand
the weight-multiplicativity of $V$ by supposing also that if
$L_{\beta}(L_{-\beta}V_{\beta}) \neq 0$ then
$L_{-\beta}(L_{\beta}V_{-\beta}) \neq 0$.

\begin{theorem}\label{last}
Let $V$ be a completely pointed weight module,
weight-multiplicative and with ${\mathcal Z}(V)=0$ over a split
Lie algebra $L$.  If $V$ has  all its nonzero  weights connected
and  $V_0 = \sum\limits_{\alpha \in \Lambda \cap
\mathcal{P}}{L_{-\alpha}V_{\alpha}}$ then either $V$ is simple or
$V=W \oplus W^{\prime}$ with $W$ and  $W^{\prime}$ simple (weight)
submodules of $V$.
\end{theorem}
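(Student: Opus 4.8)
The plan is to combine two features of the hypotheses: complete-pointedness, which forces every nonzero product of a root space with a weight space to fill the whole (one-dimensional) target weight space, and weight-multiplicativity, which guarantees such products are nonzero whenever the weights are compatible. Together these give a propagation principle: if $W$ is any submodule and $V_\gamma\subset W$ for some $\gamma\in\mathcal{P}$, then $V_\delta\subset W$ for every $\delta$ obtained from $\gamma$ by a connection in the sense of Definition \ref{con}. Indeed, writing $\mu_i=\gamma+\alpha_1+\cdots+\alpha_i$ for the partial sums of a connection $\{\gamma,\alpha_1,\dots,\alpha_n\}$, each $\mu_i$ lies in $\mathcal{P}$, so weight-multiplicativity yields $L_{\alpha_i}V_{\mu_{i-1}}\neq 0$ and complete-pointedness upgrades this to $L_{\alpha_i}V_{\mu_{i-1}}=V_{\mu_i}$; an induction then carries $V_\gamma\subset W$ along the whole chain.

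First I would refine the connection relation of Section 2 into a sign-respecting version $\rightsquigarrow$, where $\gamma\rightsquigarrow\delta$ means there is a connection whose final partial sum equals $\delta$ exactly (rather than $\pm\delta$). Using the symmetry of $\mathcal{P}$ and of weight-multiplicativity one checks that $\rightsquigarrow$ is an equivalence relation and that $\gamma\sim\delta$ holds iff $\gamma\rightsquigarrow\delta$ or $\gamma\rightsquigarrow-\delta$. Since $V$ has all its nonzero weights connected, the single $\sim$-class $\mathcal{P}$ is the union of a $\rightsquigarrow$-class $C$ and its negative $-C$; as $\gamma\mapsto-\gamma$ interchanges $\rightsquigarrow$-classes, there are exactly two possibilities: either $C=-C$, giving a single $\rightsquigarrow$-class, or $C\cap(-C)=\emptyset$ and $\mathcal{P}=C\sqcup(-C)$.

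In the first case I would show $V$ is simple. Any nonzero submodule $U$ satisfies $U\not\subset V_0$ by Lemma \ref{lema2} (using $\mathcal{Z}(V)=0$), so $V_{\gamma'}\subset U$ for some $\gamma'\in\mathcal{P}=C$; propagation then gives $\bigoplus_{\delta\in\mathcal{P}}V_\delta\subset U$, and since $U$ contains each $V_\alpha$ with $\alpha\in\Lambda\cap\mathcal{P}$ it contains $\sum_{\alpha}L_{-\alpha}V_\alpha=V_0$, whence $U=V$. In the second case I would set $W:=\langle V_\gamma\rangle$ for a fixed $\gamma\in C$ and $W':=\langle V_{\gamma'}\rangle$ for $\gamma'\in -C$; these are submodules by construction and, by propagation, $\mathcal{P}^W=C$ and $\mathcal{P}^{W'}=-C$. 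Because $\mathcal{P}=C\sqcup(-C)$ and $V_0=\sum_{\alpha\in\Lambda\cap\mathcal{P}}L_{-\alpha}V_\alpha$ splits over the two classes, $W+W'=V$; the intersection $W\cap W'$ is a submodule meeting every nonzero weight space trivially, hence lies in $V_0$ and so is $\{0\}$ by Lemma \ref{lema2}. Simplicity of $W$ (and of $W'$) follows because any nonzero submodule of $W$ contains some $V_{\gamma''}$ with $\gamma''\in C$, and propagation shows it regenerates all of $W$.

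The step I expect to be the main obstacle is controlling the weight-zero space $V_0$: I must rule out that the submodule generated by a weight space in $C$ leaks, through products that pass into $V_0$ and back out, into a weight space of $-C$, and, in the decomposable case, that the $V_0$ parts of $W$ and $W'$ overlap. This is exactly the situation the extra clause in the definition of weight-multiplicativity (the hypothesis that $L_\beta(L_{-\beta}V_\beta)\neq0$ forces $L_{-\beta}(L_\beta V_{-\beta})\neq0$) is designed to handle, by making the weight-zero-mediated transitions symmetric under $\gamma\mapsto-\gamma$; getting this bookkeeping right, together with verifying that $\mathcal{P}^{W}$ is exactly $C$ rather than something larger, is where the care will be needed.
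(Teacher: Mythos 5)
Your propagation principle (weight-multiplicativity pushes a weight space contained in a submodule along any sign-exact connection, and complete pointedness upgrades each nonzero product to the full one-dimensional weight space) is exactly the engine of the paper's proof, and your first case ($C=-C$, where simplicity follows from Lemma \ref{lema2}, propagation, and $V_0=\sum_{\alpha\in\Lambda\cap\mathcal{P}}L_{-\alpha}V_{\alpha}$) is correct. The gap is in your second case, and it is not a bookkeeping issue that more care will fix: the claim that for $\gamma\in C$ the generated submodule $W=\langle V_{\gamma}\rangle$ satisfies $\mathcal{P}^W=C$ is false under the stated hypotheses. Take $L=\mathfrak{sl}_2$ over a field of characteristic zero and $V=L$ with the adjoint action, so $\Lambda=\mathcal{P}=\{\alpha,-\alpha\}$. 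This $V$ is completely pointed, has ${\mathcal Z}(V)=0$, has all nonzero weights connected (Note \ref{note} makes $\alpha\sim-\alpha$), satisfies $V_0=H=L_{-\alpha}V_{\alpha}$, and is weight-multiplicative: the defining condition is vacuous because $\beta+\gamma\notin\mathcal{P}$ for all $\beta\in\Lambda$, $\gamma\in\mathcal{P}$, and the extra clause holds because $[L_{\mp\alpha},[L_{\pm\alpha},L_{\mp\alpha}]]=L_{\mp\alpha}\neq 0$. There is no sign-exact connection from $\alpha$ to $-\alpha$: the first step already fails, since for any $\alpha_1\in\{\pm\alpha\}$ the sum $\alpha+\alpha_1\in\{2\alpha,0\}$ is neither in $\mathcal{P}$ nor equal to $-\alpha$. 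So your case $C=\{\alpha\}$, $-C=\{-\alpha\}$, $C\cap(-C)=\emptyset$ applies; yet $\langle V_{\alpha}\rangle\supset L_{-\alpha}(L_{-\alpha}V_{\alpha})=[L_{-\alpha},H]=V_{-\alpha}$, hence $\langle V_{\alpha}\rangle=V$ and $V$ is in fact simple. The leak through $V_0$ that you flagged as the main obstacle genuinely occurs, and no refinement of your dichotomy on $\mathcal{P}$ alone can detect it, because whether $V$ is simple or decomposable is not determined by the combinatorics of the weights.

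The paper's proof is organized precisely to sidestep this: it conditions on whether a proper nonzero submodule $W$ exists (if none does, $V$ is simple and we are done), and then uses $W$ itself, rather than a submodule it is still trying to build, to control the zero-weight transitions. Concretely, propagation forces $\mathcal{P}=\mathcal{P}^W\,\dot{\cup}\,(-\mathcal{P}^W)$, the complement is defined explicitly as $W'=(\sum_{\beta\in\Lambda\cap-\mathcal{P}^W}L_{-\beta}V_{\beta})\oplus(\bigoplus_{\delta\in-\mathcal{P}^W}V_{\delta})$, and in checking $LW'\subset W'$ the dangerous term $L_{-\beta}(L_{-\beta}V_{\beta})$ is killed by first observing that $L_{\beta}(L_{\beta}V_{-\beta})\subset W$ must vanish (otherwise complete pointedness would put $V_{\beta}$ inside $W$, contradicting $\beta\notin\mathcal{P}^W$) and only then invoking the extra weight-multiplicativity clause. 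That step has no analogue in your setup, where no submodule is yet available; in the $\mathfrak{sl}_2$ example above it is exactly the step that fails. The structural fix is to replace the dichotomy on $C$ versus $-C$ by the paper's dichotomy on the existence of a proper submodule, and to prove that the explicit complement of a given proper submodule is itself a submodule.
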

\begin{proof}
Consider $W$ a nonzero submodule of $V$. By Lemma \ref{lema2} and
equation (\ref{form_subm}) we can write $W = (W \cap V_0) \oplus
(\bigoplus\limits_{\gamma \in \mathcal{P}^W} V_{\gamma})$ with
$\mathcal{P}^W \subset \mathcal{P} $ and $\mathcal{P}^W \neq
\emptyset$. Hence, we can take $\gamma_0 \in \mathcal{P}^W$ being
so
\begin{equation}\label{eq12}
0 \neq V_{\gamma_0} \subset W.
\end{equation}
  For $\delta \in
\mathcal{P} \setminus \{\pm \gamma_0\}$ we have $\gamma_0$ is
connected to $\delta.$ Therefore, there exists a connection
$\{\gamma_0, \alpha_1,...,\alpha_n\}$ from $\gamma_0$ to $\delta$
such that
$$\gamma_0,
\gamma_0+\alpha_1,...,\gamma_0+\alpha_1+\cdots+\alpha_{n-1} \in
\mathcal{P}$$ and $$\gamma_0+\alpha_1+\cdots+\alpha_n=
\epsilon_{\tiny{\delta}} \delta,$$ for some
$\epsilon_{\tiny{\delta}} \in \pm 1$. Consider $\gamma_0$,
$\alpha_1$ and $\gamma_0+\alpha_1$. The weight-multiplicativity of
$W$ gives us $0\neq L_{\alpha_1}V_{\gamma_0}$. Hence, the
completely pointed character  of $W$ implies $0\neq
L_{\alpha_1}V_{\gamma_0}=V_{\gamma_0+\alpha_1}$ and equation
(\ref{eq12}) let us conclude $$0\neq V_{\gamma_0+\alpha_1} \subset
W.$$ We can argue in a similar way from $\gamma_0+\alpha_1$,
$\alpha_2$ and $\gamma_0+\alpha_1+ \alpha_2$ to get
$$0\neq
V_{\gamma_0+\alpha_1+\alpha_2} \subset W.$$ Following this process
we obtain that
$$0\neq
V_{\gamma_0+\alpha_1+\alpha_2+\cdots+\alpha_n} \subset W$$ and so
\begin{equation}\label{eq30}
 \hbox{$0\neq V_{\epsilon_{\tiny{\delta}} \delta} \subset W$ for any $\delta \in \mathcal{P}$ and some
 $\epsilon_{\tiny{\delta}}
 \in \pm 1$}.
 \end{equation}

 Let us distinguish two possibilities. In the first one $-\gamma_0 \in \mathcal{P}^W$. Then $$\{-\gamma_0,
-\alpha_1,...,-\alpha_n\}$$ is a connection from $-\gamma_0$ to
$\delta$ satisfying
$$-\gamma_0-\alpha_1-\cdots-\alpha_n= -\epsilon_{\tiny{\delta}}
\delta.$$ By arguing as above we get
$$0\neq V_{-\epsilon_{\tiny{\delta}} \delta} \subset W$$
and so  $ \mathcal{P}^W=\mathcal{P}.$ From here, we also have $V_0
= \sum\limits_{\alpha \in \Lambda \cap
\mathcal{P}}{L_{-\alpha}V_{\alpha}} \subset W$ and so $W = V$.
Hence $V$ is simple.

 In the second possibility,
there is not any $\gamma_0 \in \mathcal{P}^W$ such that $-\gamma_0
\in \mathcal{P}^W$. By equation (\ref{eq30}) we can write
\begin{equation}\label{eq50}
\mathcal{P} = \mathcal{P}^W \dot{\cup} -\mathcal{P}^{ W}
\end{equation}
 where
$-\mathcal{P}^{ W} := \{-\gamma: \gamma \in \mathcal{P}^W\}.$ Let
us denote by
$$W^{\prime}:=(\sum\limits_{\beta \in \Lambda \cap
-\mathcal{P}^W}L_{-\beta}V_{\beta})  \oplus
(\bigoplus\limits_{\delta \in -\mathcal{P}^{W}}{V_{\delta}}).$$ We
have that $W^{\prime}$ is a submodule of $V$. Indeed,
$$LW^{\prime}= (H \oplus (\bigoplus\limits_{\alpha \in \Lambda}L_{\alpha}))
((\sum\limits_{\beta \in \Lambda \cap
-\mathcal{P}^W}L_{-\beta}V_{\beta})  \oplus
(\bigoplus\limits_{\delta \in -\mathcal{P}^{W}}{V_{\delta}}))
\subset$$
\begin{equation}\label{eq40}
(\bigoplus\limits_{\delta \in
-\mathcal{P}^{W}}{V_{\delta}})+(\bigoplus\limits_{\alpha \in
\Lambda}L_{\alpha})(\sum\limits_{\beta \in \Lambda \cap
-\mathcal{P}^W}L_{-\beta}V_{\beta})+(\bigoplus\limits_{\alpha \in
\Lambda}L_{\alpha})(\bigoplus\limits_{\delta \in
-\mathcal{P}^{W}}{V_{\delta}})).
\end{equation}
Consider the second summand in equation (\ref{eq40}). If some
$L_{\alpha}(L_{-\beta}V_{\beta}) \neq 0$ we have that in case
$\alpha =  \beta$, clearly
$L_{\alpha}(L_{-\beta}V_{\beta})=L_{\beta}(L_{-\beta}V_{\beta})\subset
V_{ \beta}\subset W^{\prime},$  and in case $\alpha = -\beta$,
since the facts $W$ is a submodule of $V$ and $\beta \notin
{\mathcal P}^W$ imply $L_{\beta}(L_{\beta}V_{-\beta})=0$, we get
by weight-multiplicativity $L_{-\beta}(L_{-\beta}V_{\beta})=0$.
Suppose then $\alpha \neq \pm \beta$.
 Then either
$[L_{\alpha}L_{-\beta}]V_{\beta}\neq 0$ or
$L_{-\beta}(L_{\alpha}V_{\beta})\neq 0$ and, by the completely
pointed character  of $V$, either
$[L_{\alpha}L_{-\beta}]V_{\beta}=V_{\alpha}$ or
$L_{-\beta}(L_{\alpha}V_{\beta})=V_{\alpha}.$ In the first case,
since $-\beta \in \mathcal{P}^{W}$, we have by
weight-multiplicativity that
$L_{-\alpha+\beta}V_{-\beta}=V_{-\alpha} \subset W.$ That is,
$-\alpha \in \mathcal{P}^{W}$. From here $\alpha \in
-\mathcal{P}^{W}$ and then $V_{\alpha} \subset W^{\prime}$. In the
second case we have in a similar way that
$L_{\beta}(L_{-\alpha}V_{-\beta})=V_{-\alpha}\subset W$ and so
$V_{\alpha} \subset W^{\prime}$. Therefore
$$(\bigoplus\limits_{\alpha \in
\Lambda}L_{\alpha})(\sum\limits_{\beta \in \Lambda \cap
-\mathcal{P}^W}L_{-\beta}V_{\beta}) \subset W^{\prime}.$$ Finally,
if we consider the third summand in equation (\ref{eq40}) and some
$L_{\alpha}V_{\delta}\neq 0$, we have that in case $\alpha =
-\delta$, clearly $L_{\alpha}V_{\delta}=L_{-\delta}V_{\delta}
\subset W^{\prime}$. Suppose then  $\alpha \neq -\delta$ and so
$L_{\alpha}V_{\delta}=V_{\alpha + \delta}$. Since $-\delta \in
\mathcal{P}^{W}$, the weight-multiplicativity of $V$ gives us
$L_{-\alpha}V_{-\delta}=V_{-\alpha - \delta}\subset W$. Hence
$\alpha +\delta \in -\mathcal{P}^{W}$ and then $V_{\alpha +
\delta} \subset W^{\prime}$. Consequently
$(\bigoplus\limits_{\alpha \in
\Lambda}L_{\alpha})(\bigoplus\limits_{\delta \in
-\mathcal{P}^{W}}{V_{\delta}}) \subset W^{\prime}$ and
$W^{\prime}$ is a submodule of $V$.

Now observe that we can write the  direct sum
$$(\sum\limits_{\alpha \in \Lambda \cap
\mathcal{P}^W}L_{-\alpha}V_{\alpha})  \oplus (\sum\limits_{\beta
\in \Lambda \cap -\mathcal{P}^W}L_{-\beta}V_{\beta}).$$ In fact,
if there exists $0\neq v_0 \in (\sum\limits_{\alpha \in \Lambda
\cap \mathcal{P}^W}L_{-\alpha}V_{\alpha})  \cap
(\sum\limits_{\beta \in \Lambda \cap
-\mathcal{P}^W}L_{-\beta}V_{\beta})$,  and taking into account
${\mathcal Z}(V)=0$, $L$ is split and $HV_0=0$, there exists
$0\neq e_{\alpha^{\prime}} \in L_{\alpha^{\prime}}$,
$\alpha^{\prime} \in \Lambda$, such that $e_{\alpha^{\prime}} v_0
\neq 0$, being then $V_{\alpha^{\prime} } \in W \cap W^{\prime}$,
a contradiction. Hence  $v_0=0$ and the   sum is direct.

Taking into account the above observation, the fact $V_0 =
\sum\limits_{\alpha \in \Lambda \cap
\mathcal{P}}{L_{-\alpha}V_{\alpha}}$ and equation (\ref{eq50}) we
have $$V=W \oplus W^{\prime}.$$

 Finally, we note that can argue
with   $W$ and $W^{\prime}$ as we did at the first of the proof
with $V$
to conclude that $W$ and $W^{\prime}$ are simple submodules, which
completes the proof of the theorem.
\end{proof}

\begin{theorem}\label{finalisimo}
Let $V$ be a completely pointed weight module,
weight-multiplicative and with  $LV=V$, ${\mathcal Z}(V)=0$ over a
split Lie algebra $L$.  Then $L =\bigoplus\limits_{i\in I} I_{i}$
 with any $I_{i}$ a nonzero ideal of $L$
satisfying $[I_{i},I_{j}]=0$ if $i \neq j$, and  $V =
\bigoplus\limits_{k \in K} V_{k}$ is the direct sum of the family
of its minimal  submodules, each one being a simple weight
submodule  of $V$ in such a way that for any $k \in K$ there
exists a unique $i \in I$ such that $I_iV_k \neq 0.$ Furthermore
$V_k$ is a weight module over $I_i$.
\end{theorem}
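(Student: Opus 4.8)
The plan is to put the two global decompositions in place and then refine the module side componentwise by means of Theorem \ref{last}. Since $LV=V$ and $\mathcal{Z}(V)=0$, Theorem \ref{corolario} gives $V=\bigoplus_{[\gamma]\in\mathcal P/\sim}V_{[\gamma]}$; invoking Theorem \ref{lema_final} (equivalently, Theorem \ref{coro1} together with the pairing argument preceding \ref{lema_final}) I would simultaneously obtain $L=\bigoplus_{i\in I}I_i$ with $[I_i,I_k]=0$ for $i\neq k$, the assignment to each class $[\gamma]$ of the unique ideal $I_i$ with $I_iV_{[\gamma]}\neq 0$, and the fact that each $V_{[\gamma]}$ is a weight module over that $I_i$. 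Thus the only genuinely new task is to split each $V_{[\gamma]}$ into simple submodules.

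To do this I would check that every $V_{[\gamma]}$ meets the hypotheses of Theorem \ref{last} as a weight module over $L$. It is completely pointed because its nonzero weight spaces are weight spaces of $V$; it has center zero since $\mathcal{Z}(V_{[\gamma]})\subset\mathcal{Z}(V)=0$; by the very construction of $V_{[\gamma]}$ its nonzero weight system is the single class $\mathcal{P}_\gamma$, so all its nonzero weights are connected, and its zero-weight space is exactly $\sum_{\alpha\in\Lambda\cap\mathcal{P}_\gamma}L_{-\alpha}V_\alpha$. Weight-multiplicativity is inherited, because the products $L_\alpha V_{\gamma'}$ agree with those computed in $V$, and whenever $\gamma'\in\mathcal{P}_\gamma$ and $\alpha+\gamma'\in\mathcal{P}$ we have $L_\alpha V_{\gamma'}\neq 0$ in $V$ with, by Lemma \ref{pag5}-1, $\alpha+\gamma'\in\mathcal{P}_\gamma$, so the product stays inside $V_{[\gamma]}$ (the supplementary clause of the definition is inherited verbatim for the same reason). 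Theorem \ref{last} then yields that each $V_{[\gamma]}$ is either simple or a direct sum $W\oplus W'$ of two simple weight submodules; collecting these over all classes produces $V=\bigoplus_{k\in K}V_k$ with every $V_k$ a simple, hence minimal, weight submodule.

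It remains to match the simple pieces with the ideals and to argue that they are precisely the minimal submodules. For the pairing, fix $V_k\subset V_{[\gamma]}$ and let $I_i$ be the ideal attached to $[\gamma]$; since $\mathcal{Z}(V_k)\subset\mathcal{Z}(V)=0$ one has $LV_k\neq 0$, while $I_{i'}V_k\subset I_{i'}V_{[\gamma]}=0$ for $i'\neq i$, so $I_iV_k\neq 0$ with $i$ the unique such index, and $V_k$ is a weight module over $I_i$ as a submodule of the $I_i$-weight module $V_{[\gamma]}$. Finally, because the weight systems of distinct $V_k$ are pairwise disjoint (they come from distinct $\sim$-classes, and within one class the systems $\mathcal{P}^W$ and $-\mathcal{P}^W$ are disjoint by (\ref{eq50})) while each weight space of $V$ is one-dimensional, any minimal submodule $M$ must meet some $V_k$ in a nonzero weight space, whence $M\cap V_k\neq 0$ and by simplicity $M=V_k$; so $\{V_k\}_{k\in K}$ is exactly the family of minimal submodules. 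The step I expect to be the main obstacle is precisely this last identification together with the inheritance of weight-multiplicativity: both rest on the completely pointed hypothesis, which is what makes the behaviour of weight spaces rigid enough to control submodules of $V$ globally.
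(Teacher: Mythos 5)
Your proposal is correct and follows essentially the same route as the paper: invoke Theorem \ref{lema_final} for the two global decompositions and the pairing, verify that each $V_{[\gamma]}$ inherits the hypotheses of Theorem \ref{last} (completely pointed, weight-multiplicative, ${\mathcal Z}=0$, all weights connected, $V_0$-condition), and then split any non-simple $V_{[\gamma]}$ as $W\oplus W'$. The only difference is that you spell out the two steps the paper dismisses as ``clear'' --- the uniqueness of the ideal $I_i$ acting nontrivially on each simple piece $V_k$, and the identification of the $V_k$ with the minimal submodules via the disjointness of their weight systems and one-dimensionality of weight spaces --- and both of your arguments for these are sound.
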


\begin{proof}
By Theorem \ref{lema_final},
\begin{equation}\label{60}
 V = \bigoplus\limits_{j \in J}V_j
 \end{equation}
is the direct sum of the weight submodules   $$V_j =
V_{[\gamma]}=(\sum\limits_{\alpha \in \Lambda \cap
\mathcal{P}_{\gamma}}L_{-\alpha}V_{\alpha}) \oplus
(\bigoplus\limits_{\delta \in \mathcal{P}_{\gamma}}V_{\delta}),$$
$[\gamma] \in \mathcal{P}/\sim$, having any $V_{[\gamma]}$ its
weight system, ${\mathcal P}_{\gamma}$, with all of its weights
connected. Furthermore $L =\bigoplus\limits_{i\in I} I_{i}$
 with any $I_{i}$ an ideal of $L$
satisfying that $[I_{i},I_{k}]=0$ if $i \neq k$,  that for any $j
\in J$ there exists a unique $i \in I$ such that $I_iV_j \neq 0$
and that   $V_j$ is a weight module over $I_i$. We also have that
any of the $V_{[\gamma]}$ is weight-multiplicative as consequence
of the weight-multiplicativity of $V$. Clearly $V_{[\gamma]}$ is
completely pointed, and finally ${\mathcal Z}(V_{[\gamma]})=0$ as
consequence of ${\mathcal Z}(V)=0$. We can apply Theorem
\ref{last} to any $V_{[\gamma]}$ so as to conclude that either
$V_{[\gamma]}$ is simple or $V_{[\gamma]}=W_{[\gamma]} \oplus
W_{[\gamma]}^{\prime}$ with $W_{[\gamma]}$ and
$W_{[\gamma]}^{\prime}$ simple submodules of $V$. From here, it is
clear that by writing $V_j=W_j \oplus W_j^{\prime}$ in equation
(\ref{60}) if $V_j$ is not simple,  we get that the resulting
decomposition satisfies the assertions of the theorem.
 \end{proof}

\section{Some applications and final remarks}

 We recall that split Lie algebras plays an important role in
mathematical physics. As it is explained in \cite{d4}, the split
decomposition of a  Lie algebra is one of the most interesting
fine gradings of a Lie algebra, with a heavy influence in the
field of particle physics via the usual identification of
observables with generators in a splitting Cartan subalgebra, and
particles living comfortably in the root spaces. Also, this split
decomposition appears as a common feature in many attempts of
describing  strong interactions in nature. Other independent role
comes from the theory of contraction of Lie algebras. As claimed
in \cite{g2+}, contractions are important in physics because they
explain in terms of Lie algebras why some theories arise as a
limit of more exact theories. Consider a split Lie algebra $$L= H
\oplus (\bigoplus\limits_{\alpha \in \Lambda}{L}_{\alpha}).$$ We
have that split Lie algebras are examples of weight modules over
themselves, where
 $ \mathcal{P}=\Lambda$ and  $V_{\alpha} = L_{\alpha}$ for $\alpha \in \mathcal{P} \cup
 \{0\}$. From here, Definition \ref{def nueva conec} and  Theorem \ref{finalisimo} let us assert that if
 $L$ is completely pointed, weight-multiplicative and perfect,
 then $L$ is the  direct sum of the family
of its minimal  ideals, $L =\bigoplus\limits_{[\alpha] \in
\Lambda/\approx} I_{[\alpha]}$,  each one being a simple split Lie
algebra satisfying $[I_{[\alpha]},I_{[\beta]}]=0$ if $[\alpha]
\neq [\beta]$.

Let us now center our attention on split Lie superalgebras
$L=L^{\bar
 0} \oplus L^{\bar 1}$, see
\cite{Super}.  Lie superalgebras plays an important role in
theoretical physics, specially in conformal field theory and
supersymmetries (see \cite{JMP1, JMP2, JMP3} for recent
references). The notion of supersymmetry reflects the known
symmetry between bosons and fermions, being the mathematical
structure formalizing this idea the one of supergroup, or ${\hu
Z}_2$-graded Lie group. As mentioned in \cite{Cand11}, its job is
that of modelling continuous supersymmetry transformations between
bosons and fermions. As Lie algebras consist of generators of Lie
groups, the infinitesimal Lie group elements tangent to the
identity, so ${\hu Z}_2$--graded Lie algebras, otherwise known as
Lie superalgebras, consist of generators of (or infinitesimal)
supersymmetry transformations. We also refers to    \cite{bena}
and \cite{Gie} for more interesting applications of Lie
superalgebras. We have that
 $L=L^{\bar
 0} \oplus L^{\bar 1}$ is a weight module over the split Lie
 algebra $L^{\bar
 0}$. So Theorem \ref{finalisimo} let us also assert that if
 $L$ is completely pointed, weight-multiplicative and $L^{\bar
 0}$ is perfect,
 then $L$ is the  direct sum of the family
of its minimal (graded) ideals,  each one being a simple split Lie
superalgebra.

Consider now a Lie triple system $T$ with triple product $[\cdot,
\cdot, \cdot]$. As it is pointed out in \cite{JPA-1}, Lie triple
systems are well related to the theory of quantum mechanics with
$PT$-symmetric Hamiltonians and Krein space-related models in
general, by identifying this underlying structure in the
recognizing of $PT$-like involutory structures in physical models.
If we consider a split Lie triple system $T$, see
\cite{Yotriple1}, it is well known that the even part ${\frak L}$
of its standard embedding  is a split Lie algebra in such a way
that the natural action $${\frak L} \times T \to T$$ makes $T$ a
weight module over the split Lie algebra ${\frak L}$. From here
the results in the present paper apply and we can also conclude
that if
 $T$ is completely pointed, weight-multiplicative and ${\frak L}$ is perfect,
 then $T$ is the  direct sum of the family
of its minimal ternary ideals, $T =\bigoplus\limits_{[\alpha] \in
\Lambda/\approx} I_{[\alpha]}$,  each one being a simple split Lie
triple system  satisfying $[I_{[\alpha]},T,I_{[\beta]}]=0$ if
$[\alpha] \neq [\beta]$.

Finally, we would like to centering on Jordan triple systems $(J,
\{\cdot, \cdot, \cdot \})$. Jordan triple systems also plays a
sensible role in the study of four-dimensional superstring or
heterotic string theories, and in the study of orthosymmetric
ortholattices which are fundamental to approximate to Hilbert
lattices (see \cite{JPA-2, JPA-3}). Split Jordan triple systems
are a particular case of split twisted inner derivation triple
systems, see \cite{Twisted}. The even part ${\frak L}$ of the
standard embedding
 of $J$ is also a split Lie algebra in such a way that
the natural action ${\frak L} \times J \to J$ also makes $J$ a
weight module over the split Lie algebra ${\frak L}$. From here,
we have as in the previous example that  the results in the
present paper apply and   we  get that if
 $J$ is completely pointed, weight-multiplicative and ${\frak L}$ is perfect,
 then $J$ is the  direct sum of the family
of its minimal ternary ideals, $J =\bigoplus\limits_{[\alpha] \in
\Lambda/\approx} I_{[\alpha]}$,  each one being a simple split
Jordan triple system  satisfying
$\{I_{[\alpha]},T,I_{[\beta]}\}+\{I_{[\alpha]},I_{[\beta]},T\}+\{T,I_{[\alpha]},I_{[\beta]}\}=0$
if $[\alpha] \neq [\beta]$.

\medskip

\medskip

\begin{remark}
An interesting open question is to describe simple weight modules
(of arbitrary dimension) $V$ over split Lie algebras. Motivated by
the results in \cite{simplealgebra} and \cite{simpletriple} on
simple split Lie algebras and simple split Lie triple systems
respectively, we conjecture $V$ could be described as a direct
limit of well-known finite dimensional weight modules. Also an
interesting question is that of studying weight modules  over
split Lie algebras with  non-necessarily symmetric weight and root
systems. These will be the topics of a future research of the
authors.
\end{remark}

\medskip




\begin{thebibliography}{99}



\bibitem{bena}  Benamor, H.  and   Pinczon, G.:  The graded Lie algebra structure of
Lie superalgebra deformation theory. Lett. Math. Phys. 18 no. 4,
307--313,  (1989).



\bibitem{Benkart} Benkart, G., Britten, D., Lemire, F.: Modules with bounded weight
multiplicities for simple Lie algebras, Math. Z. 225(2), 333--353,
(1997).



\bibitem{JMP1} Boyallian, C., Meinardi,
V.: Quasifinite representations of the Lie superalgebra of quantum
pseudodifferential operators. J. Math. Phys. 49, no. 2, 023505, 13
pp.  (2008).







\bibitem{Britten} Britten, D. J.; Lemire, F.: On modules of bounded multiplicities
for the symplectic algebras. Trans. Amer. Math. Soc. 351, n. 8,
3413--3431, (1999).


\bibitem{simplealgebra} Calder\'{o}n,  A.J.:  On  involutive Lie algebras having a Cartan
 decomposition. Bull. Austral. Math. Soc. 69, 191-202, (2004).








\bibitem{Yoalg} Calder\'{o}n,  A.J.: On split Lie algebras with symmetric root systems.  Proc. Indian. Acad. Sci, Math.
Sci. 118, 351-356, (2008).



\bibitem{simpletriple} Calder\'{o}n,  A.J.: On simple split Lie triple  systems.
 Algebr. Represent. Theory 12, 401-415, (2009).



\bibitem{Yotriple1} Calder\'{o}n,  A.J.: On split Lie triple systems II.  Proc. Indian. Acad. Sci, Math.
Sci. 120, 185-198, (2010).

\bibitem{Twisted} Calder\'{o}n,  A.J., Forero, M.: Split Twisted
inner derivation triple systems. Comm. Alg. 38, n.1, 28-45,
(2010).



\bibitem{Super} Calder\'{o}n,  A.J., S\'{a}nchez, J.M.: On split Lie superalgebras. J. Math. Phys. 51, 073511, 9 pp,
  (2010).

  \bibitem{JPA-2}  Chevalier, G.: Orthosymmetries and Jordan triples. Int. J. of Theoretical Physics 37, n.1, 577-583, (1998).



 \bibitem{phy1} Chu, Y.J., Huang, F.,  Zheng, Z.J.: A commutant of $\beta\gamma$-system associated to
 the highest weight
  module $V_4$ of $sl(2,\Bbb C)$ . J. Math. Phys. 51(9), 092301, 32 pp,
  (2010).



\bibitem{d4} Draper, C.;  Mart\'in, C.  and  Viruel, A.: Fine Gradings on the Lie algebra $d_4$. Forum Math. 22, 863--877, (2010).



\bibitem{phy2} Dimitrov, I., Futorny, V.,  Penkov, I.: A reduction theorem for highest weight
 modules over toroidal Lie algebras. Comm. Math. Phys. 250(1), 47--63,
 (2004).




\bibitem{g2+} Fialowski, A. and De Montigny, M.: On Deformations and Contractions of Lie Algebras.
Symmetry, Integrability and Geometry: Methods and Applications
Vol. 2, paper 048, 10 pp, (2006).

\bibitem{Gie}  Gi\'{e}, P.A.;   Pinczon, G. and  Ushirobira, R.:  Back to the
Amitsur-Levitzki theorem: a super version for the orthosymplectic
Lie superalgebra ${\frak{osp}}(1,2n)$. Lett. Math. Phys. 66
 no. 1-2, 141--155, (2003).




\bibitem{phy3}  Grantcharov, D., Jung, J.H.,  Kang, S.J.,  Kim, M.: Highest weight modules
 over quantum queer superalgebra $U_q({ {\frak q}(n)})$. Comm. Math. Phys. 296(3), 827--860,
 (2010).



 \bibitem{JPA-3}  Gunaydin, M.: $N=2$ superconformal algebras and Jordan triple systems. Physics Letters B. 255 Vol. 1, 46--50,
 (1991).



\bibitem{JPA-1}  Gunther, U. and Kuzhel, S.: $PT$-symmetry, Cartan decompositions, Lie triple systems and Krein space-related Clifford
algebras. J. Phys. A: Math. Theor. 43, 392002, 10 pp, (2010).




 \bibitem{phy4} Iohara, K.:  Unitarizable highest weight modules of the $N=2$ super Virasoro algebras:
 untwisted sectors. Lett. Math. Phys. 91(3), 289--305,  (2010).




  \bibitem{JMP2} Iohara, K.,  Koga, Y.: Note on spin modules associated to
$\Bbb Z$-graded Lie superalgebras. J. Math. Phys. 50,  no. 10,
103508, 9 pp.
 (2009).



\bibitem{Kp} Kaplansky I.: Virasoro algebras. Comm. Math. Phys.
86, 49--54, (1982).

\bibitem{phy5} Liu, D.,  Gao, S.,    Zhu, L.: Classification of
irreducible weight modules over $W$-algebra $W(2,2)$. J. Math.
Phys. 49(1), 113503, 6 pp,  (2008).




\bibitem{Neeb} Neeb, K.-H.:  Integrable Roots in Split
 Graded Lie Algebras. {J.  Algebra.}    225,   534--580, {2000}.


   \bibitem{JMP3} Poletaeva, E.:  Embedding of the Lie superalgebra $D(2,1;\alpha)$
into the Lie superalgebra of pseudodifferential symbols on
$S^{1\vert 2}$. J. Math. Phys. 48, no. 10, 103504, 17 pp. (2007).


\bibitem{Cand11}  Raptis, I.:  Mixed-Jordan-Lie superalgebra I.
arXiv:math-ph/0110030v4, 14 Nov. 2001.



\bibitem{Schue1}   Schue, J.R.: Hilbert Space methods in the theory of
Lie algebras. Trans. Amer. Math. Soc.  95, 69--80, (1960).




\bibitem{Stu} Stumme, N.: The structure of locally finite split Lie
algebras. J. Algebra 220, 664--693, (1999).

\bibitem{SZ} Su, Y.C., Zhao K.: Generalized Virasoro and super-Virasoro algebras
 and modules of intermediate series. J. Algebra 252, 1-19, (2002).


\bibitem{chino1} Takemura, K.: The decomposition of level-$1$ irreducible
highest-weight modules with respect to the level-$0$ actions of
the quantum affine algebra. J. Phys. A 31, n. 5., 1467--1485,
(1998).



\bibitem{chino2} Zapletal, A. Difference equations and highest-weight modules of
$U_q[{\rm sl}(n)]$. J. Phys. A 31, n. 47, 9593--9600, (1998).


\bibitem{Z} Zhao, K.: Weight modules over
 generalized Witt algebras with 1-dimensional weight spaces. Forum Math. 16, no. 5,
 725-748, (2004).


\end{thebibliography}
\end{document}